\numberwithin{equation}{section}
\newtheorem{thm}{Theorem}[section]
\newtheorem{cor}[thm]{Corollary}
\newtheorem{lem}[thm]{Lemma}
\newtheorem{prop}[thm]{Proposition}
\newtheorem{defn}[thm]{Definition}
\theoremstyle{definition}
\newtheorem{rmk}[thm]{Remark}
\newtheorem{example}[thm]{Example}
\newcounter{alphabet}
\newcommand{\bysame}{\leavevmode\hbox to3em{\hrulefill}\,}
\begin{document}
\baselineskip=21pt
\markboth{} {}

\title[Semi-continuous $G$-frames in Hilbert spaces]
{Semi-continuous $G$-frames in Hilbert spaces}

\author{Anirudha Poria}

\address{Department of Mathematics, School of Engineering and Applied Sciences, Bennett University, Greater Noida, Uttar Pradesh, India}

\address{Department of Mathematics, Indian Institute of Science, Bengaluru, Karnataka, India}

\email{anirudhap@iisc.ac.in}
\keywords{$g$-frames; continuous $g$-frames; semi-continuous $g$-frames; perturbation; frame identity; stability.} \subjclass[2010]{Primary 42C15; Secondary 47B38, 42C40.}

\begin{abstract} 
In this paper, we introduce the concept of semi-continuous $g$-frames in Hilbert spaces. We first construct an example of semi-continuous $g$-frames using the Fourier transform of the Heisenberg group and study the structure of such frames. Then, as an application we provide some fundamental identities and inequalities for semi-continuous $g$-frames. Finally, we present a classical perturbation result and prove that semi-continuous $g$-frames are stable under small perturbations.

\end{abstract}
\date{\today}
\maketitle
\def\BC{{\mathbb C}} \def\BQ{{\mathbb Q}}
\def\BR{{\mathbb R}} \def\BI{{\mathbb I}}
\def\BZ{{\mathbb Z}} \def\BD{{\mathbb D}}
\def\X{{\mathcal X}} \def\BB{{\mathbb B}}
\def\BS{{\mathbb S}} \def\BH{{\mathcal H}}
\def\BE{{\mathbb E}}  \def\BK{{\mathcal K}}
\def\BN{{\mathbb N}}  \def\H{{\mathbb H}}
\def\g{{\mathcal{G}_j}}
\def\gxj{{\mathcal{G}_{x,j}}}
\def\gtr{{\mathcal{G}_j^*}}
\def\gt{{\mathcal{\tilde{G}}_j}}
\def\ga{{\varGamma_j}}
\def\gm{{\Gamma_{x,j}}}
\vspace{-.5cm}

\section{Introduction}
Discrete and continuous frames arise in many applications in mathematics and, in particular, they play important roles in scientific computations and digital signal processing. The concept of a frame in Hilbert spaces has been introduced in 1952 by Duffin and Schaeffer \cite{duf52}, in the context of nonharmonic Fourier series (see \cite{you01}). After the work of Daubechies et al. \cite{dau86} frame theory got considerable attention outside signal processing and began to be more broadly studied (see \cite{chr13, gro01}). A frame for a Hilbert space is a redundant set of vectors in Hilbert space which provides non-unique representations of vectors in terms of frame elements. The redundancy and flexibility offered by frames has spurred their application in several areas of mathematics, physics, and engineering such as wavelet theory, sampling theory, signal processing, image processing, coding theory and many other well known fields. Applications of frames, especially in the last decade, motivated researchers to find some generalization of frames like continuous frames \cite{ali93, kai94}, $g$-frames \cite{sun06}, Hilbert$-$Schmidt frames \cite{por17, pori17}, $K$-frames \cite{gav12, gav15} and etc. Our main purpose in this paper is to study a generalization of frames, namely semi-continuous $g$-frames, which are natural generalizations of $g$-frames and continuous $g$-frames. 
We investigate the structure of semi-continuous $g$-frames and establish some identities and inequalities of these frames. Also, we present a perturbation result and discuss the stability of the perturbation of a semi-continuous $g$-frame.

Throughout this paper, $\BH$ and $\BK$ are two Hilbert spaces; $J$ is a countable index set; $(\X, \mu)$ is a measure space with positive measure $\mu$; $\{\BK_x\}_{x \in \X}$ is a family of closed subspaces of $\BK$; $\mathcal{L}(\BH, \BK_x)$ is the collection of all bounded linear operators from $\BH$ into $\BK_x$; if $\BK_x=\BH$ for any $x \in \X$, we denote $\mathcal{L}(\BH, \BK_x)$ by $\mathcal{L}(\BH)$.

We recall that a family $\{f_j\}_{j \in J}$ in $\BH$ is called a (discrete) {\it frame} for $\BH$, if there exist constants $0 < A \leq B < \infty$ such that
\begin{equation*}
A \Vert f \Vert^2 \leq \sum_{j \in J } \vert \langle f,f_j \rangle \vert^2 \leq  B \Vert f \Vert^2, \quad \forall f \in \BH.
\end{equation*}

The concept of the discrete frame was generalized to continuous frame by Kaiser \cite{kai94} and independently by Ali et al. \cite{ali93}. A family of vectors $\{\psi_x\}_{x \in \X} \subseteq \BH$ is called a {\it continuous frame} for $\BH$ with respect to $(\X, \mu)$, if $\{\psi_x\}_{x \in \X}$ is weakly-measurable, i.e., for any $f \in \BH, \; x \to \langle f, \psi_x \rangle$ is a measurable function on $\X$, and if there exist two constants $A,B > 0$ such that
\[ A \Vert f \Vert^2 \leq \int_{\X} \vert \langle f,\psi_x \rangle \vert^2 d \mu(x) \leq  B \Vert f \Vert^2, \quad \forall f \in \BH. \]
Continuous frames have been widely applied in continuous wavelets transform \cite{ali00} and the short-time Fourier transform \cite{gro01}. We refer to \cite{ask01, for05, gab03} for more details on continuous frames.

The notion of a discrete frame was extended to $g$-frame by Sun \cite{sun06}, which generalized all the existing frames such as bounded quasi-projectors \cite{for04}, frames of subspaces \cite{cas04}, pseudo-frames \cite{li04}, oblique frames \cite{chr04}, etc. $G$-frames are natural generalizations of frames as members of a Hilbert space to bounded linear operators. Let $\{ \BK_j: j \in J \} \subset \BK$ be a sequence of Hilbert spaces. A family $\{ \Lambda_j \in \mathcal{L}(\BH,\BK_j):j \in J \}$ is called a {\it $g$-frame}, for $\BH$ with respect to $\{ \BK_j: j \in J \}$ if there are two constants $A, B>0$ such that
\begin{equation*}
A \Vert f \Vert^2 \leq \sum_{j \in J }  \Vert \Lambda_j(f)  \Vert^2 \leq  B \Vert f \Vert^2, \quad \forall f \in \BH.
\end{equation*} 

The continuous $g$-frames were proposed by Dehghan and Hasankhani Fard in \cite{deh08}, which are an extension of $g$-frames and continuous frames. A family $\{ \Lambda_x \in \mathcal{L}(\BH,\BK_x):x \in \X \}$ is called a {\it continuous $g$-frame} for $\BH$ with respect to $(\X, \mu)$, if $\{\Lambda_x: x \in \X \}$ is weakly-measurable, i.e., for any $f \in \BH, \; x \to \Lambda_x(f)$ is a measurable function on $\X$, and if there exist two constants $A,B > 0$ such that
\[ A \Vert f \Vert^2 \leq \int_{\X} \Vert \Lambda_x(f) \Vert^2 d \mu(x) \leq  B \Vert f \Vert^2, \quad \forall f \in \BH. \]
Notice that if $\X$ is a countable set and $\mu$ is a counting measure, then the continuous $g$-frame is just the $g$-frame. By the Riesz representation theorem, for any $\Lambda \in \mathcal{L}(\BH, \BC)$, there exist a $h \in \BH$, such that $\Lambda(f)=\langle f,h \rangle$ for all $f \in \BH$. Hence, if $\BK_x=\BC$ for any $x \in \X$, then the continuous $g$-frame  is equivalent to the continuous frame.

This paper is organized as follows.  After the introduction, in Section \ref{sec2}, we introduce the semi-continuous $g$-frames in Hilbert spaces and construct an example using the Fourier transform of the Heisenberg group. Then we study the structure of semi-continuous $g$-frames using shift-invariant spaces. In Section \ref{sec3}, we first list some fundamental identities and inequalities of discrete frames just for the contrast to the main results of this section. Then we derive some important identities and inequalities of semi-continuous $g$-frames. Finally, in Section \ref{sec4}, we present a classical perturbation result and prove that semi-continuous $g$-frames are stable under small perturbations.

\section{Semi-continuous g-frames}\label{sec2}
Let $\{\BK_{x,j}:x \in \X, j \in J \} \subset \BK$ be a family of Hilbert spaces. 
\begin{defn}
A family $\{ \Lambda_{x,j} \in \mathcal{L}(\BH,\BK_{x,j}):x \in \X, j \in J \}$ is called a  semi-continuous $g$-frame for $\BH$ with respect to $(\X, \mu)$, if $\{\Lambda_{x,j}: x \in \X, j \in J\}$ is weakly-measurable, i.e., for any $f \in \BH$ and any $j \in J$, the function $x \to \Lambda_{x,j}(f)$ is measurable on $\X$, and if there exist two constants $A,B > 0$ such that
\begin{equation}\label{eq01}
A \Vert f \Vert^2 \leq \int_{\X} \sum_{j \in J} \Vert \Lambda_{x,j}(f) \Vert^2 d \mu(x) \leq  B \Vert f \Vert^2, \quad \forall f \in \BH.
\end{equation}
\end{defn}
If only the right-hand inequality of \eqref{eq01} is satisfied, we call $\{\Lambda_{x,j}: x \in \X, j \in J\}$ the semi-continuous $g$-Bessel sequence for $\BH$ with respect to $(\X, \mu)$ with Bessel bound $B$.
\begin{rmk}
If $0<\mu(\X)< \infty$, and for any fixed $x \in \X$, the family $\{\Lambda_{x,j}:j \in J\}$ is a $g$-frame for $\BH$ with respect to $\{\BK_{x,j}:j \in J\}$, then $\{\Lambda_{x,j}: x \in \X, j \in J\}$ is a semi-continuous $g$-frame for $\BH$ with respect to $(\X, \mu)$. Moreover if $|J|< \infty$, and for any fixed $j \in J$, the family $\{\Lambda_{x,j}:x \in \X \}$ is a continuous $g$-frame for $\BH$, then $\{\Lambda_{x,j}: x \in \X, j \in J\}$ is a semi-continuous $g$-frame for $\BH$ with respect to $(\X, \mu)$.
\end{rmk}
In the following, we shall construct an example of such frames using the Fourier transform of the Heisenberg group.

\subsection{Heisenberg Group}
The Heisenberg group $\H$ is a Lie group whose underlying manifold is $\BR^3$. We denote points in $\H$ by $(p, q, t)$ with $p, q, t \in \BR$, and define the group operation by
\begin{equation}
(p_1, q_1, t_1) (p_2, q_2, t_2)=(p_1 + p_2, q_1 + q_2, t_1 + t_2 + \frac{1}{2} (p_1 q_2 - q_1 p_2)).
\end{equation}
It is easy to verify that this is a group operation, with the origin $0 = (0, 0, 0)$ as the
identity element. Notice that the inverse of $(p, q, t)$ is given by $(-p,-q,-t)$. The Haar measure on the group $\H=\BR^3$ is the usual Lebesgue measure.

The irreducible representations of the Heisenberg group has been identified by all non-zero elements in $\BR^*(=\BR \setminus \{0 \})$ (see \cite{fol95}). Indeed, for any $\lambda \in \BR^*$, the associated irreducible representation $\rho_{\lambda}$ of $\H$ is equivalent to {\it Schr{\"o}dinger} representation into the class of unitary operators on $L^2(\BR)$, such that for any $(p, q, t) \in \H$ and $f \in L^2(\BR)$, the operator $\rho_{\lambda}(p, q, t)$ is defined by
\begin{equation}
\rho_{\lambda}(p, q, t)f(x)=e^{i \lambda t}  e^{i \lambda (px+\frac{1}{2}(pq))} f(x+q).
\end{equation}
 It is easy to see that $\rho_{\lambda}(p, q, t)$ is a unitary operator satisfying the homomorphism property:
 \[ \rho_{\lambda}((p_1, q_1, t_1) (p_2, q_2, t_2))= \rho_{\lambda}(p_1, q_1, t_1) \rho_{\lambda}(p_2, q_2, t_2). \]
Thus each $\rho_{\lambda}$ is a strongly continuous unitary representation of $\H$. By Stone and von Neumann theorem (\cite{fol95}), $\{ \rho_{\lambda}: \lambda \in \BR^* \}$ are all the infinite dimensional irreducible unitary representations of $\H$, whose set has non-zero Plancherel measure. The measure $|\lambda| d\lambda$ is the Plancherel measure on the dual space $\widehat{\H}\; (\cong \BR^*)$ of $\H$, and $d \lambda$ is the Lebesgue measure on $\BR^*$. For $\varphi \in L^2(\H)$ and $\lambda \in \BR^*$, we denote $\widehat{\varphi}(\lambda)$ the operator-valued Fourier transform of $\varphi$ at a given irreducible representation $\rho_\lambda$, which is defined by
\begin{equation}
\widehat{\varphi}(\lambda)=\int_{\H} \varphi(x) \rho_\lambda(x) dx.
\end{equation}
The operator $\widehat{\varphi}(\lambda)$ is a unitary map on $L^2(\BR)$ into $L^2(\BR)$, such that for any $f \in L^2(\BR)$
\[ \widehat{\varphi}(\lambda)f(y)=\int_{\H} \varphi(x) \rho_\lambda(x) f(y) dx. \]
Therefore $\widehat{\varphi}(\lambda)$ belongs to $L^2(\BR) \otimes L^2(\BR)$. If $\varphi \in  L^2(\H)$, $\widehat{\varphi}(\lambda)$ is actually a Hilbert$-$Schmidt operator on $L^2(\BR)$ and from the Plancherel theorem we have
\begin{equation}
\int_{\H} |\varphi(x)|^2 dx= \int_{\BR^*} \Vert \widehat{\varphi}(\lambda) \Vert^2_{H.S.} \; |\lambda| d\lambda,
\end{equation}
the norm $\Vert \cdot \Vert_{H.S.}$ denotes the Hilbert$-$Schmidt norm in $L^2(\BR) \otimes L^2(\BR)$. The proof of the Plancherel theorem for the Heisenberg group can be found in \cite{gel77}, and for more general groups, see \cite{fol95}.

To construct our example of semi-continuous $g$-frames, we shall define another unitary operator as follows.

Let $\Pi:=[0,1]$ and $\mathfrak{L}:=\ell^2(\BZ, L^2(\BR)\otimes L^2(\BR))$ be the Hilbert space of all sequences with values in the space $L^2(\BR)\otimes L^2(\BR)$, i.e.,
\[ \mathfrak{L}= \Big\{ \{a_n\}_{n \in \BZ}: \; a_n \in L^2(\BR)\otimes L^2(\BR) \mathrm{\;and\;} \sum_{n \in \BZ} \| a_n \|^2_{H.S.}< \infty \Big\}. \] 
\begin{lem}\label{lem1}
For any $\sigma \in \Pi$, let $T_\sigma: L^2(\H) \rightarrow \mathfrak{L}$ given by $T_\sigma f(j)=|\sigma+j|^{\frac{1}{2}} \widehat{f}(\sigma+j)$. Then $T_\sigma$ is well-defined and $\sum_{j \in \BZ}|\sigma+j| \; \|\widehat{f}(\sigma+j)\|^2_{H.S.}<\infty.$
\end{lem}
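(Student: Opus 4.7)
The plan is to reduce the statement to an application of the Plancherel theorem for the Heisenberg group (the identity displayed just before the lemma) combined with a change of variables and Tonelli's theorem. The key observation is that the half-open unit intervals $\{[j,j+1):j\in\BZ\}$ tile $\BR$, so integrating over $\BR^*$ is the same as a countable sum of integrals over unit intervals, and after translating each interval to $\Pi=[0,1]$ via $\lambda=\sigma+j$, the running variable becomes $\sigma$ and the index $j$ sweeps $\BZ$.

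First I would start from the Plancherel identity
$$\int_{\H} |f(x)|^2\, dx \;=\; \int_{\BR^*} \|\widehat{f}(\lambda)\|^2_{H.S.}\, |\lambda|\, d\lambda,$$
and partition $\BR^*$ (up to the single point $0$, which has Lebesgue measure zero) as the countable disjoint union $\bigcup_{j\in\BZ}[j,j+1)$. Splitting the integral accordingly and substituting $\lambda=\sigma+j$ on each piece rewrites the right-hand side as
$$\sum_{j\in\BZ}\int_0^1 |\sigma+j|\,\|\widehat{f}(\sigma+j)\|^2_{H.S.}\, d\sigma.$$
The integrand is non-negative and jointly measurable in $(\sigma,j)$, so Tonelli's theorem lets me interchange the sum and the integral over $\Pi$, yielding
$$\int_{\Pi}\sum_{j\in\BZ}|\sigma+j|\,\|\widehat{f}(\sigma+j)\|^2_{H.S.}\, d\sigma \;=\; \|f\|^2_{L^2(\H)} \;<\;\infty.$$

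Since the integrand on the left is a non-negative measurable function of $\sigma$ whose integral over $\Pi$ is finite, it must be finite for almost every $\sigma\in\Pi$; this is precisely $\|T_\sigma f\|^2_{\mathfrak{L}}<\infty$, so $T_\sigma f\in\mathfrak{L}$ and $T_\sigma$ is well defined. The same computation incidentally shows that the average of $\|T_\sigma f\|^2_{\mathfrak{L}}$ over $\sigma\in\Pi$ equals $\|f\|^2_{L^2(\H)}$, a Parseval-type identity that will presumably be used later when $T_\sigma$ is fed into the semi-continuous $g$-frame construction.

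The main obstacle I foresee is only cosmetic rather than mathematical: as phrased, the conclusion is asserted for \emph{every} $\sigma\in\Pi$, whereas the Plancherel/Tonelli argument genuinely gives it only for almost every $\sigma$. I expect the author implicitly means almost every $\sigma$, since $\widehat{f}(\lambda)$ is itself defined only up to a $\lambda$-null set, so $T_\sigma f$ is meaningful only modulo null sets anyway. The measurability of the integrand in $\sigma$ needed for Tonelli is inherited from measurability of $\lambda\mapsto\|\widehat{f}(\lambda)\|^2_{H.S.}$ on $\BR^*$, which is standard from the construction of the operator-valued Fourier transform.
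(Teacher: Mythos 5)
Your proof is correct and follows essentially the same route as the paper: the paper invokes the Plancherel theorem together with a ``periodization method,'' which is exactly the tiling of $\BR^*$ by unit intervals, the substitution $\lambda=\sigma+j$, and the Tonelli interchange that you spell out explicitly. Your observation that the finiteness conclusion genuinely holds only for almost every $\sigma\in\Pi$ (not every $\sigma$, as the statement literally asserts) is a fair and worthwhile caveat that the paper's own proof glosses over.
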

\begin{proof}
Let $f \in  L^2(\H).$ Using Plancherel theorem and an application of periodization method, we obtain
\begin{eqnarray*}
\|f\|^2_{L^2(\H)}=\int_{\BR^*} \|\widehat{f}(\lambda)\|_{H.S.}^2 |\lambda| d\lambda 
&=& \int_{\sigma \in \Pi} \sum_{j \in \BZ} |\sigma+j| \; \|\widehat{f}(\sigma+j)\|^2_{H.S.} d\sigma \\
&=& \int_{\sigma \in \Pi} \sum_{j \in \BZ} \| T_\sigma f(j) \|_{H.S.}^2 d\sigma.
\end{eqnarray*} 
Hence, the result follows from the fact that $f \in  L^2(\H)$.
\end{proof}
\begin{example}
Consider $\X=\Pi$ and $J=\BZ$. For any $\sigma \in \Pi$ and $j \in \BZ$, define $\Lambda_{\sigma, j}: L^2(\H) \rightarrow L^2(\BR) \otimes L^2(\BR)$ as $\Lambda_{\sigma, j}(f)=T_\sigma f(j).$ Then for every $f \in L^2(\H)$, using Lemma \ref{lem1} we get 
\begin{eqnarray*}
\int_{\sigma \in \Pi} \sum_{j \in \BZ} \| \Lambda_{\sigma, j}(f) \|_{H.S.}^2 d\sigma &=&\int_{\sigma \in \Pi} \sum_{j \in \BZ} \| T_\sigma f(j) \|_{H.S.}^2 d\sigma \\
&=& \int_{\sigma \in \Pi} \sum_{j \in \BZ} \left\| |\sigma+j|^{\frac{1}{2}} \widehat{f}(\sigma+j) \right\|_{H.S.}^2 d\sigma \\
&=& \| f \|^2_{L^2(\H)}.
\end{eqnarray*}
Therefore $\{ \Lambda_{\sigma, j}: \sigma \in \Pi, j \in \BZ \}$ is a semi-continuous $g$-frame with frame bounds $A=B=1$.
\end{example}
\begin{cor}
Let $0<\mu(\X)<\infty$. For any fixed $\sigma \in \X$, let $\{ \Lambda_{\sigma, j}: j \in J \}$ be a $g$-frame for $L^2(\H)$. Then $\{ \Lambda_{\sigma, j}: \sigma \in \X, j \in J \}$ is a semi-continuous $g$-frame for $L^2(\H)$ with respect to $(\X,\mu)$ with unified frame bounds multiplied by $\mu(\X)$.
\end{cor}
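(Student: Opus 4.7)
The plan is to read ``unified frame bounds'' in the hypothesis as asserting that the lower and upper $g$-frame bounds $A, B > 0$ can be chosen independently of $\sigma \in \X$, so that for every $\sigma \in \X$ and every $f \in L^2(\H)$
\[ A \Vert f \Vert^2 \leq \sum_{j \in J} \Vert \Lambda_{\sigma, j}(f) \Vert^2 \leq B \Vert f \Vert^2. \]
Given this, the proof is essentially a matter of integrating the pointwise inequality over $\X$.

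First I would fix $f \in L^2(\H)$ and, using the uniform bounds above, observe that the nonnegative function $\sigma \mapsto \sum_{j \in J} \Vert \Lambda_{\sigma, j}(f) \Vert^2$ is pinched between $A\Vert f\Vert^2$ and $B\Vert f\Vert^2$ pointwise. Next I would briefly address measurability: for each fixed $j$, the map $\sigma \mapsto \Lambda_{\sigma, j}(f)$ is measurable by hypothesis on the family (this is already part of what one needs to call the collection a semi-continuous $g$-family), and the partial sums $\sigma \mapsto \sum_{j \in F} \Vert \Lambda_{\sigma,j}(f)\Vert^2$ over finite $F \subset J$ are therefore measurable; the monotone limit as $F \uparrow J$ is measurable as well. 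Once measurability is in hand, I would integrate with respect to $\mu$ and pull the constants out, using $0 < \mu(\X) < \infty$ to obtain
\[ A\, \mu(\X)\, \Vert f \Vert^2 \leq \int_{\X} \sum_{j \in J} \Vert \Lambda_{\sigma, j}(f) \Vert^2 \, d\mu(\sigma) \leq B\, \mu(\X)\, \Vert f \Vert^2. \]
This is precisely the semi-continuous $g$-frame inequality \eqref{eq01} with bounds $A\mu(\X)$ and $B\mu(\X)$.

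The only real subtlety I anticipate is interpretive rather than technical: without the uniformity of $A, B$ across $\sigma$, the lower bound of the integrated inequality could collapse to zero and the upper bound could fail to be finite, so the hypothesis must be understood in the ``unified'' sense. Once this is stated, the remaining steps are a direct appeal to monotonicity of the Lebesgue integral and finiteness of $\mu(\X)$, with no further machinery needed.
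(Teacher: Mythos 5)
Your proof is correct and follows the same route as the paper: fix $f$, use the pointwise $g$-frame inequality, and integrate over $\X$ to pick up the factor $\mu(\X)$. You are in fact slightly more careful than the paper's own proof, which silently assumes the bounds $A,B$ are uniform in $\sigma$ and does not comment on measurability of $\sigma \mapsto \sum_{j\in J}\Vert\Lambda_{\sigma,j}(f)\Vert^2$; your explicit reading of ``unified frame bounds'' as uniformity in $\sigma$ is exactly the interpretation needed to make the statement true.
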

\begin{proof}
Since $\{ \Lambda_{\sigma, j}: j \in J \}$ be a $g$-frame for $L^2(\H)$, there exist constants $A,B>0$ such that  
\[ A\| f \|^2_{L^2(\H)} \leq \sum_{j \in J} \|\Lambda_{\sigma, j}(f) \|^2 \leq B \| f \|^2_{L^2(\H)}, \quad  \forall f \in L^2(\H).\]
Taking integral from all sides of the preceding inequality, we obtain
\[ A\mu(\X) \| f \|^2_{L^2(\H)} \leq \int_{\X} \sum_{j \in J} \|\Lambda_{\sigma, j}(f) \|^2 d\mu(\sigma) \leq B \mu(\X) \| f \|^2_{L^2(\H)}, \quad  \forall f \in L^2(\H).\]
Hence, the result follows.
\end{proof}
Now, we shall define shift-invariant spaces and give an example.
\begin{defn}
Let $\Gamma$ be a countable subset of $\H$. A subspace $\mathcal{V} \subset L^2(\H)$ is called $\Gamma$-invariant if $L_{\gamma}\phi \in \mathcal{V}$ for all $\gamma \in \Gamma$ and all $\phi \in \mathcal{V}$, where $L_{\gamma}\phi(w)=\phi(\gamma^{-1}w), \; w \in \H.$ If $\Gamma$ is a discrete subset of $\H$, then $\mathcal{V}$ is called shift-invariant.
\end{defn}
\begin{example}
Let $\phi \in L^2(\H)$ and $\Gamma$ be a lattice. Then the space $\langle \phi \rangle_{\Gamma}$ generated by $\Gamma$-shifts of $\phi$ is a shift-invariant space. 
\end{example}
Before we prove the main result of this section, we first need the following.

Let $T: L^2(\H) \rightarrow L^2\left(\Pi, \mathfrak{L} \right)$. Then for any $\sigma \in \Pi$ and $j \in \BZ$, $Tf(\sigma)(j) \in L^2(\BR)\otimes L^2(\BR)$. By Lemma \ref{lem1}, it is clear that $Tf(\sigma)=T_{\sigma}f$. Let 
\[ \Gamma=\Gamma_1\Gamma_0=\big\{xz \in \H: x \in \Gamma_1, z \in \Gamma_0 \big\},\] 
where $\Gamma_1$ be any discrete subset of $\H$ and $\Gamma_0$ be the lattice of integral points in $\BZ$. Then for $y \in \H$ and $\sigma \in \Pi$, define the unitary operator $\tilde{\rho}_{\sigma}(y):\mathfrak{L} \rightarrow \mathfrak{L}$ by \[ (\tilde{\rho}_{\sigma}(y)h)_j=\rho_{\sigma+j}(y)\circ h_j, \quad h \in \mathfrak{L},\] where $\rho_{\sigma+j}(y)\circ h_j$ denotes function composition. Also, define $\tilde{\rho}(y): L^2(\Pi, \mathfrak{L}) \rightarrow L^2(\Pi, \mathfrak{L})$ by 
\[ (\tilde{\rho}(y) a )(\sigma)=\tilde{\rho}_{\sigma}(y)a(\sigma), \quad a \in L^2(\Pi, \mathfrak{L}).\]
Note that if $\gamma \in \Gamma_0$, then $(\tilde{\rho}(\gamma) a )(\sigma)=e^{2 \pi i \langle \sigma, \gamma  \rangle} a(\sigma)$ for all $a \in L^2(\Pi, \mathfrak{L}).$ Further, the mapping $T$ is unitary, and for each $y \in \H$, we have \[ T(L_y \phi)(\sigma)=(\tilde{\rho}(y) T\phi )(\sigma). \]
Proofs of these results and a more detailed study of these operators can be found in (\cite{cur14}, Section 3). Fix a discrete subset $\Gamma$ of $\H$ of the form $\Gamma_1 \Gamma_0.$ Let $\mathcal{V} \subset L^2(\H)$ be a countable set. Define $E(\mathcal{V})=\{L_{\gamma}\phi: \gamma \in \Gamma, \phi \in \mathcal{V}\}$ and put $\mathcal{S}=\overline{\mathrm{span}}\; E(\mathcal{V})$. Let $R$ be the range function associated with $\mathcal{S}$. Motivated by the results of Currey et al. \cite{cur14}, we obtain the following.
\begin{lem}
Let $f \in L^2(\H)$, $\Gamma \subseteq \H$ and $\mathcal{V} \subset L^2(\H)$. Then
\[ \sum_{\phi \in \mathcal{V}, \gamma \in \Gamma}|\langle f, L_{\gamma}\phi \rangle|^2=\int_{\Pi}\sum_{\phi \in \mathcal{V}, k \in \Gamma_1} |\langle Tf(\sigma), T(L_k\phi)(\sigma) \rangle|^2 d\sigma. \]
\end{lem}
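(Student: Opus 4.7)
The strategy is to use the unitary intertwiner $T\colon L^2(\H) \to L^2(\Pi,\mathfrak L)$, peel off the $\Gamma_0$-part of each $\gamma\in\Gamma$, and recognize the inner products $\langle f, L_\gamma \phi\rangle$ as Fourier coefficients on $\Pi$; Parseval on $\ell^2(\Gamma_0)\cong L^2(\Pi)$ then produces the integral on the right.

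\textbf{Step 1: Unitarity.} Fix $\phi\in\mathcal V$ and write every $\gamma\in\Gamma$ uniquely as $\gamma=k\gamma_0$ with $k\in\Gamma_1$ and $\gamma_0\in\Gamma_0$. Since $L_{ab}=L_aL_b$, we have $L_\gamma\phi=L_k L_{\gamma_0}\phi$, so by the stated homomorphism property of $\tilde\rho$ combined with the intertwining relation $T(L_y\phi)=\tilde\rho(y)T\phi$ recalled above,
\[
T(L_\gamma\phi)(\sigma)=\tilde\rho_\sigma(k)\,T(L_{\gamma_0}\phi)(\sigma)=e^{2\pi i\langle \sigma,\gamma_0\rangle}\,T(L_k\phi)(\sigma),
\]
where in the last step I use both the identity $T(L_{\gamma_0}\phi)(\sigma)=e^{2\pi i\langle \sigma,\gamma_0\rangle}T\phi(\sigma)$ for $\gamma_0\in\Gamma_0$ and the definition $T(L_k\phi)(\sigma)=\tilde\rho_\sigma(k)T\phi(\sigma)$.

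\textbf{Step 2: Fourier coefficients on $\Pi$.} Using unitarity of $T$ and pulling the scalar exponential outside,
\[
\langle f, L_\gamma\phi\rangle_{L^2(\H)}=\int_{\Pi}\bigl\langle Tf(\sigma),\,T(L_\gamma\phi)(\sigma)\bigr\rangle_{\mathfrak L}\,d\sigma=\int_{\Pi}c_{k,\phi}(\sigma)\,e^{-2\pi i\langle\sigma,\gamma_0\rangle}\,d\sigma,
\]
where $c_{k,\phi}(\sigma):=\langle Tf(\sigma),T(L_k\phi)(\sigma)\rangle_{\mathfrak L}$. Thus $\langle f,L_{k\gamma_0}\phi\rangle$ is exactly the $\gamma_0$-th Fourier coefficient of the function $c_{k,\phi}\in L^1(\Pi)$. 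An application of Cauchy–Schwarz together with Lemma~\ref{lem1} (applied to $f$ and to $L_k\phi$) shows that $c_{k,\phi}\in L^2(\Pi)$, so Parseval's theorem for Fourier series on $\Pi$ gives
\[
\sum_{\gamma_0\in\Gamma_0}\bigl|\langle f,L_{k\gamma_0}\phi\rangle\bigr|^2=\int_{\Pi}|c_{k,\phi}(\sigma)|^2\,d\sigma=\int_{\Pi}\bigl|\langle Tf(\sigma),T(L_k\phi)(\sigma)\rangle\bigr|^2\,d\sigma.
\]

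\textbf{Step 3: Assemble and exchange sum and integral.} Summing over $k\in\Gamma_1$ and $\phi\in\mathcal V$ on both sides and invoking Tonelli's theorem (all integrands are non-negative and measurable) to bring the sum inside the integral on the right yields the claimed identity.

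\textbf{Main obstacle.} The only nontrivial point is the uniqueness of the factorization $\gamma=k\gamma_0$ with $k\in\Gamma_1,\gamma_0\in\Gamma_0$, which is needed so that the sum over $\gamma\in\Gamma$ rearranges as a double sum over $(k,\gamma_0)$. This is built into the hypothesis $\Gamma=\Gamma_1\Gamma_0$ as used in \cite{cur14}. Beyond that, the argument is a routine Plancherel/Fubini computation once the intertwining formula and the diagonal action of $\tilde\rho$ on $\Gamma_0$ are in place; integrability prerequisites for Parseval and for the interchange follow from the Bessel-type bound supplied by Lemma~\ref{lem1}.
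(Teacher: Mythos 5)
Your proposal follows essentially the same route as the paper's proof: unitarity of $T$ to move the inner product to $L^2(\Pi,\mathfrak L)$, the factorization $\gamma=k\gamma_0$ with the diagonal $e^{2\pi i\langle\sigma,\gamma_0\rangle}$ action of $\tilde\rho$ on $\Gamma_0$, identification of $\langle f,L_{k\gamma_0}\phi\rangle$ as the $\gamma_0$-th Fourier coefficient of $c_{k,\phi}$, and Parseval followed by Tonelli. The only difference is cosmetic (you are slightly more explicit about why $c_{k,\phi}\in L^2(\Pi)$ and about the interchange of sum and integral, points the paper asserts with the same level of brevity), so the proof is correct and matches the paper.
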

\begin{proof}
Let $ f \in L^2(\H)$. Since $\|Tf\|=\|f\|$, we have 
\begin{eqnarray*}
\sum_{\phi \in \mathcal{V}, \gamma \in \Gamma}|\langle f, L_{\gamma}\phi \rangle|^2
=\sum_{\phi \in \mathcal{V}, \gamma \in \Gamma}|\langle Tf, T(L_{\gamma}\phi) \rangle|^2
&=& \sum_{\phi \in \mathcal{V}, \gamma \in \Gamma}\left| \int_{\Pi} \langle Tf(\sigma), T(L_{\gamma}\phi)(\sigma) \rangle \; d \sigma\right|^2 \\
&=& \sum_{\phi \in \mathcal{V}, \gamma \in \Gamma}\left| \int_{\Pi} \langle Tf(\sigma), (\tilde{\rho}(\gamma) T\phi)(\sigma) \rangle \; d \sigma\right|^2.
\end{eqnarray*}
Putting $\gamma=kl$, with $k \in \Gamma_1,  l \in \Gamma_0$, we get 
\[ (\tilde{\rho}(kl) T\phi)(\sigma)=\tilde{\rho}_{\sigma}(kl) T\phi(\sigma)=\tilde{\rho}_{\sigma}(k)\tilde{\rho}_{\sigma}(l) T\phi(\sigma)=e^{2 \pi i \langle \sigma,l \rangle}\tilde{\rho}_{\sigma}(k)T\phi(\sigma). \]
Thus
\begin{eqnarray*}
\sum_{\phi \in \mathcal{V}, \gamma \in \Gamma}\left| \int_{\Pi} \langle Tf(\sigma), (\tilde{\rho}(\gamma) T\phi)(\sigma) \rangle \; d \sigma\right|^2 
= \sum_{\phi \in \mathcal{V}, (k,l) \in \Gamma}\left| \int_{\Pi} \langle Tf(\sigma), \tilde{\rho}_{\sigma}(k)T\phi(\sigma) \rangle e^{-2 \pi i \langle \sigma,l \rangle} d \sigma\right|^2.
\end{eqnarray*}
For each $k$, define $F_k(\sigma)=\langle Tf(\sigma), \tilde{\rho}_{\sigma}(k)T\phi(\sigma) \rangle .$ Then $F_k$ is integrable with square summable Fourier coefficients, hence $F_k \in L^2(\Pi)$. Using Fourier inversion formula we obtain 
\begin{eqnarray*}
\sum_{\phi \in \mathcal{V}, (k,l) \in \Gamma}\left| \int_{\Pi} \langle Tf(\sigma), \tilde{\rho}_{\sigma}(k)T\phi(\sigma) \rangle e^{-2 \pi i \langle \sigma,l \rangle} d \sigma\right|^2 
&=&\sum_{\phi \in \mathcal{V}, (k,l) \in \Gamma} |\hat{F_k}(l)|^2\\
&=& \sum_{\phi \in \mathcal{V}, k \in \Gamma_1} \|F_k\|^2 \\
&=& \sum_{\phi \in \mathcal{V}, k \in \Gamma_1} \int_{\Pi} |F_k(\sigma)|^2 d\sigma.
\end{eqnarray*}
Again, by substituting $F_k(\sigma)=\langle Tf(\sigma), \tilde{\rho}_{\sigma}(k)T\phi(\sigma) \rangle$ in the above we get
\begin{eqnarray}\label{eq1}
\sum_{\phi \in \mathcal{V}, \gamma \in \Gamma}|\langle f, L_{\gamma}\phi \rangle|^2 =\sum_{\phi \in \mathcal{V}, k \in \Gamma_1} \int_{\Pi} |F_k(\sigma)|^2 d\sigma 
& =& \sum_{\phi \in \mathcal{V}, k \in \Gamma_1} \int_{\Pi} |\langle Tf(\sigma), \tilde{\rho}_{\sigma}(k)T\phi(\sigma) \rangle|^2 d\sigma \nonumber \\
&= & \int_{\Pi} \sum_{\phi \in \mathcal{V}, k \in \Gamma_1}  |\langle Tf(\sigma), T(L_k\phi)(\sigma) \rangle|^2 d\sigma.
\end{eqnarray}
This completes the proof.
\end{proof}
Now we are in a position to prove our main result of this section.
\begin{thm}
If $\{ T_{\sigma}(L_k\phi): k \in \Gamma_1, \phi \in \mathcal{V} \}$ is a frame for its spanned vector space for almost every $\sigma \in \Pi$. Then $\{L_{\gamma}\phi: \gamma \in \Gamma, \phi \in \mathcal{V}\}$ is also a frame for its spanned vector space.
\end{thm}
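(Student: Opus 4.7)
The plan is to combine the preceding lemma with the fiberization structure developed in the discussion before it, essentially reducing the global frame inequality to a pointwise (in $\sigma$) frame inequality on the fibers.

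First I would let $\mathcal{S}=\overline{\mathrm{span}}\,\{L_{\gamma}\phi:\gamma\in\Gamma,\phi\in\mathcal{V}\}$ and, for almost every $\sigma\in\Pi$, define the fiber space $J(\sigma)=\overline{\mathrm{span}}\,\{T_{\sigma}(L_{k}\phi):k\in\Gamma_{1},\phi\in\mathcal{V}\}\subset\mathfrak{L}$, with the hypothesis giving frame bounds $A(\sigma),B(\sigma)>0$ for this family in $J(\sigma)$. Because the unitary $T$ intertwines $L_{y}$ with $\tilde{\rho}(y)$ (and because the $\Gamma_{0}$-action on $L^{2}(\Pi,\mathfrak{L})$ is modulation by $e^{2\pi i\langle\sigma,\ell\rangle}$), the space $\mathcal{S}$ is $\Gamma_{0}$-invariant, so $T\mathcal{S}$ is a shift-invariant subspace in the sense of the range-function / Helson-type theory recalled before the lemma. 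The standard consequence is that for every $f\in\mathcal{S}$, $Tf(\sigma)\in J(\sigma)$ for a.e.\ $\sigma\in\Pi$; this is the ingredient that will let the pointwise frame inequality apply to $Tf(\sigma)$ rather than just to its projection onto $J(\sigma)$.

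Next, for $f\in\mathcal{S}$, I would apply the previous lemma to rewrite
\[
\sum_{\phi\in\mathcal{V},\gamma\in\Gamma}|\langle f,L_{\gamma}\phi\rangle|^{2}=\int_{\Pi}\sum_{\phi\in\mathcal{V},k\in\Gamma_{1}}|\langle Tf(\sigma),T_{\sigma}(L_{k}\phi)\rangle|^{2}\,d\sigma,
\]
and then insert, under the integral, the pointwise frame bounds
\[
A(\sigma)\,\|Tf(\sigma)\|^{2}\leq\sum_{\phi,k}|\langle Tf(\sigma),T_{\sigma}(L_{k}\phi)\rangle|^{2}\leq B(\sigma)\,\|Tf(\sigma)\|^{2},
\]
valid because $Tf(\sigma)\in J(\sigma)$. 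Using that $T$ is unitary together with Lemma \ref{lem1} gives $\int_{\Pi}\|Tf(\sigma)\|^{2}d\sigma=\|f\|^{2}_{L^{2}(\H)}$. Thus, assuming (as is implicit in the statement) that the pointwise bounds are essentially bounded and essentially bounded away from zero, i.e.\ $A:=\mathrm{ess\,inf}\,A(\sigma)>0$ and $B:=\mathrm{ess\,sup}\,B(\sigma)<\infty$, I obtain $A\|f\|^{2}\leq\sum_{\phi,\gamma}|\langle f,L_{\gamma}\phi\rangle|^{2}\leq B\|f\|^{2}$ for every $f\in\mathcal{S}$, which is exactly the frame condition on $\mathcal{S}$.

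The step I expect to be the main obstacle is the measurability/uniformity issue: extracting uniform frame bounds $A,B$ from the a.e.\ pointwise frame property, and justifying that $Tf(\sigma)$ genuinely lies in $J(\sigma)$ a.e.\ rather than only its projection. Both points go through the range-function description of $\mathcal{S}$ attributed to Currey et al.\ earlier in the section; I would either quote that result directly or verify it by checking it on the dense spanning set $\{L_{\gamma}\phi\}$, for which $T(L_{\gamma}\phi)(\sigma)=\tilde{\rho}_{\sigma}(\gamma)T\phi(\sigma)\in J(\sigma)$ is immediate, and then passing to limits in $L^{2}(\Pi,\mathfrak{L})$ via a subsequence that converges pointwise a.e.
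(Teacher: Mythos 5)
Your proposal follows essentially the same route as the paper: reduce via the preceding lemma to the fiberwise sum, apply the pointwise frame inequality to $Tf(\sigma)$ (which lies in the fiber $R(\sigma)$ by the range-function description of $\mathcal{S}$), and integrate over $\Pi$ using unitarity of $T$. You are in fact more careful than the paper on the one delicate point: the paper simply asserts uniform constants $0<A\leq B<\infty$ valid for a.e.\ $\sigma$, whereas you correctly flag that one must have $\mathrm{ess\,inf}\,A(\sigma)>0$ and $\mathrm{ess\,sup}\,B(\sigma)<\infty$ for the integration step to yield genuine frame bounds.
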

\begin{proof}
Suppose that $f \in \mathcal{S}$, then $Tf(\sigma) \in R(\sigma)$ holds for a.e. $\sigma$. Since for a.e. $\sigma \in \Pi$, $\{ T_{\sigma}(L_k\phi): k \in \Gamma_1, \phi \in \mathcal{V} \}$ is a frame for its spanned vector space, there exist $0<A \leq B<\infty$ such that
\[ A\|Tf(\sigma)\|^2 \leq \int_{\Pi}\sum_{\phi \in \mathcal{V}, k \in \Gamma_1}|\langle Tf(\sigma), T(L_k\phi)(\sigma) \rangle|^2 d\sigma \leq  B\|Tf(\sigma)\|^2. \]
holds for a.e. $\sigma$. Integrating over $\Pi$ yields 
\begin{eqnarray*}
A\|f\|^2=A\|Tf\|^2=A \int_{\Pi}\|Tf(\sigma)\|^2 d\sigma &\leq & \int_{\Pi}\sum_{\phi \in \mathcal{V}, k \in \Gamma_1}|\langle Tf(\sigma), T(L_k\phi)(\sigma) \rangle|^2 d\sigma \\ 
&\leq & B \int_{\Pi}\|Tf(\sigma)\|^2 d\sigma = B\|f\|^2.
\end{eqnarray*}
Using (\ref{eq1}) we obtain
\[ A\|f\|^2 \leq \sum_{\phi \in \mathcal{V}, \gamma \in \Gamma}|\langle f, L_{\gamma}\phi \rangle|^2 \leq B\|f\|^2. \]
Hence, we have the desired result.
\end{proof}
\begin{rmk}
Notice that the family $\{ T_{\sigma}(L_k\phi): k \in \Gamma_1, \phi \in \mathcal{V} \}$ constitutes a frame for the space which consists of  all functions of the form $T_\sigma f$ for every $f \in L^2(\H)$. Similarly, the above result can be extended for semi-continuous $g$-frames using the Riesz representation theorem.
\end{rmk}

\section{Identities and inequalities for semi-continuous g-frames}\label{sec3}
Let $\{ \Lambda_{x,j} \in \mathcal{L}(\BH,\BK_{x,j}):x \in \X, j \in J \}$ be a semi-continuous $g$-frame for $\BH$ with respect to $(\X, \mu)$. Then we define the semi-continuous $g$-frame operator $S$ as follows:
\[S: \BH \to \BH, \quad Sf=\int_{\X}\sum_{j \in J} \Lambda^*_{x,j} \Lambda_{x,j} f \; d\mu(x), \]
where $\Lambda^*_{x,j}$ is the adjoint of $\Lambda_{x,j}$. It is easy to show that $S$ is a bounded, invertible, self-adjoint and positive operator. Therefore for any $f \in \BH$, we have the following reconstructions:
\[f=SS^{-1}f=\int_{\X}\sum_{j \in J} \Lambda^*_{x,j} \Lambda_{x,j} S^{-1}f \; d\mu(x),\]
\[f=S^{-1}Sf=\int_{\X}\sum_{j \in J} S^{-1} \Lambda^*_{x,j} \Lambda_{x,j} f \; d\mu(x).\]
Denote $\tilde{\Lambda}_{x,j}=\Lambda_{x,j} S^{-1}$. Then 
$\{ \tilde{\Lambda}_{x,j} :x \in \X, j \in J \}$ is also a semi-continuous $g$-frame with frame bounds $\frac{1}{B},\frac{1}{A}$, which we call the canonical dual frame of $\{ \Lambda_{x,j}:x \in \X, j \in J \}$. A semi-continuous $g$-frame $\{\gxj \in \mathcal{L}(\BH,\BK_{x,j}):x \in \X, j \in J \}$ is called an alternate dual frame of $\{ \Lambda_{x,j}:x \in \X, j \in J \}$ if for all $f \in \BH$, the following identity holds:
\begin{equation}\label{eq301}
f=\int_{\X}\sum_{j \in J} \Lambda^*_{x,j} \gxj f \; d\mu(x)=\int_{\X}\sum_{j \in J} \mathcal{G}^*_{x,j} \Lambda_{x,j} f \; d\mu(x).
\end{equation}
A semi-continuous $g$-frame $\{\Lambda_{x,j}:x \in \X, j \in J \}$ is called a Parseval semi-continuous $g$-frame, if the frame bounds $A=B=1$. For any $\X_1 \subset \X$, we denote $\X_1^c=\X \setminus \X_1$, and define the following operator:
\[ S_{\X_1}f=\int_{\X_1}\sum_{j \in J} \Lambda^*_{x,j} \Lambda_{x,j} f \; d\mu(x). \]

In \cite{bal06}, the authors proved a longstanding conjecture of the signal processing community: a signal can be reconstructed without information about the phase. While working on efficient algorithms for signal reconstruction, Balan et al. \cite{bal07} discovered a remarkable new identity for Parseval discrete frames, given in the following form.
\begin{thm}\label{th1}
Let $\{ f_j\}_{j \in J }$ be a Parseval frame for $\BH$, then for every $K\subset J$ and every $f \in \BH$, we have
\[ \sum_{j \in K} \vert \langle f,f_j \rangle \vert^2- \bigg\Vert \sum_{j \in K} \langle f,f_j \rangle f_j \bigg\Vert^2=\sum_{j \in K^c} \vert \langle f,f_j \rangle \vert^2- \bigg\Vert \sum_{j \in K^c} \langle f,f_j \rangle f_j \bigg\Vert^2. \]
\end{thm}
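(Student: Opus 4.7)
The plan is to reformulate the identity in operator-theoretic language using the partial frame operator
\[ S_K f := \sum_{j \in K} \langle f, f_j \rangle f_j, \qquad K \subset J. \]
This is a well-defined bounded operator because $\{f_j\}_{j \in K}$ inherits the Bessel bound $1$ from the Parseval assumption. A direct computation shows that $S_K$ is self-adjoint and that $\langle S_K f, f \rangle = \sum_{j \in K} |\langle f, f_j \rangle|^2$ for every $f \in \BH$. Moreover, because $\{f_j\}_{j \in J}$ is Parseval, the full frame operator is the identity, and hence
\[ S_K + S_{K^c} = I. \]

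With this setup, I would rewrite the left-hand side of the claimed identity as
\[ \sum_{j \in K} |\langle f, f_j \rangle|^2 - \Vert S_K f \Vert^2 = \langle S_K f, f \rangle - \langle S_K f, S_K f \rangle = \langle S_K (I - S_K) f, f \rangle, \]
where self-adjointness of $S_K$ is used in the last step. The same manipulation applied to the complementary index set shows that the right-hand side equals $\langle S_{K^c}(I - S_{K^c}) f, f \rangle$. Thus the theorem is equivalent to the operator identity
\[ S_K(I - S_K) = S_{K^c}(I - S_{K^c}). \]

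The last step is pure algebra: using $S_{K^c} = I - S_K$, one computes
\[ S_{K^c}(I - S_{K^c}) = (I - S_K) S_K = S_K - S_K^2 = S_K(I - S_K), \]
which finishes the proof. I do not anticipate a genuine obstacle; the argument is an algebraic identity in $\mathcal{L}(\BH)$, and the ``magic'' is simply the symmetry $T(I-T) = (I-T)T$ applied to $T = S_K$, together with the Parseval hypothesis that makes $S_K$ and $S_{K^c}$ complementary self-adjoint summands of the identity. The only minor point to verify is that each $S_K$ is bounded and self-adjoint, but this is immediate from the Bessel bound $1$ inherited from the Parseval frame and the rank-one self-adjoint form of each summand $\langle \cdot, f_j \rangle f_j$.
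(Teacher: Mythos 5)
Your proof is correct, and it is essentially the argument the paper itself uses: the paper states Theorem~\ref{th1} without proof (citing \cite{bal07}), but its proof of the semi-continuous $g$-frame generalization in equation~\eqref{eq31} proceeds exactly as you do, via the operator identity $P-P^{*}P=Q^{*}-Q^{*}Q$ for $P+Q=I$ (Lemma~\ref{lem31}) applied to the self-adjoint partial frame operators and polarized against $f$. No gaps; your verification that each $S_K$ is bounded and self-adjoint with $\langle S_K f,f\rangle=\sum_{j\in K}|\langle f,f_j\rangle|^2$ is the only analytic input needed, and you supply it.
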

\begin{thm}\label{th001}
If $\{ f_j\}_{j \in J}$ be a Parseval frame for $\BH$, then for every $K\subset J$ and every $f \in \BH$, we have
\[ \sum_{j \in K} \vert \langle f,f_j \rangle \vert^2 + \bigg\Vert \sum_{j \in K^c} \langle f,f_j \rangle f_j \bigg\Vert^2 \geq \frac{3}{4} \Vert f \Vert^2. \]
\end{thm}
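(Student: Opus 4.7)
My plan is to reduce the inequality to Theorem \ref{th1} together with a simple operator-theoretic bound on the partial frame operator. Let $S_K$ denote the partial frame operator defined by $S_K f = \sum_{j \in K} \langle f, f_j \rangle f_j$. Since each rank-one summand is self-adjoint and positive, $S_K$ is a positive self-adjoint operator on $\BH$, and the Parseval hypothesis gives $S_K + S_{K^c} = I$. Two identities will do essentially all the work: $\sum_{j \in K} |\langle f,f_j\rangle|^2 = \langle S_K f, f \rangle$, and (using self-adjointness) $\|S_K f\|^2 = \langle S_K^2 f, f\rangle$.

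Set $a := \sum_{j \in K} |\langle f,f_j\rangle|^2$ and $b := \|S_K f\|^2$. The Parseval identity $\sum_{j \in J}|\langle f,f_j\rangle|^2 = \|f\|^2$ yields $\sum_{j \in K^c}|\langle f,f_j\rangle|^2 = \|f\|^2 - a$, and applying Theorem \ref{th1} and solving for $\|S_{K^c} f\|^2$ gives
\[
\bigg\| \sum_{j \in K^c} \langle f,f_j \rangle f_j \bigg\|^2 \; = \; \|f\|^2 - 2a + b.
\]
Substituting this into the left-hand side of the desired inequality collapses the claim to the equivalent assertion $a - b \leq \tfrac{1}{4}\|f\|^2$.

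Now I would observe that $a - b = \langle S_K f, f\rangle - \langle S_K^2 f, f\rangle = \langle S_K(I - S_K) f, f\rangle$, and recall that both $S_K$ and $S_{K^c} = I - S_K$ are positive operators summing to $I$. Hence the spectrum of $S_K$ is contained in $[0,1]$, and since the scalar function $t \mapsto t(1-t)$ is bounded above by $\tfrac{1}{4}$ on $[0,1]$, the spectral theorem yields the operator inequality $S_K(I-S_K) \leq \tfrac{1}{4} I$. Pairing with $f$ gives $a - b \leq \tfrac{1}{4}\|f\|^2$, which is exactly what is needed.

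There is no real obstacle here; the only conceptual step is recognizing the difference $\sum_K |\langle f,f_j\rangle|^2 - \|S_K f\|^2$ as the quadratic form of the operator $S_K - S_K^2$. Everything else is bookkeeping using Theorem \ref{th1} and the elementary bound $t(1-t) \leq \tfrac{1}{4}$.
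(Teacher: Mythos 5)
Your proof is correct. Note, however, that the paper does not actually prove Theorem \ref{th001}: it is quoted from \cite{bal07} purely as motivation for the semi-continuous $g$-frame results, so there is no in-paper proof to compare against line by line. The closest internal analogue is Theorem \ref{th02}, whose specialization to a discrete Parseval frame with $\lambda=\tfrac12$ recovers exactly the $\tfrac34\Vert f\Vert^2$ bound; its engine is Lemma \ref{lem32}, the inequality $\Vert Pf\Vert^2+2\lambda\langle Qf,f\rangle\geq(1-(\lambda-1)^2)\Vert f\Vert^2$ for self-adjoint $P+Q=I$, which at $\lambda=\tfrac12$ with $P=S_K$, $Q=S_{K^c}$ is literally your reduced claim $a-b\leq\tfrac14\Vert f\Vert^2$ after expanding $\Vert Pf\Vert^2+\langle Qf,f\rangle=\Vert f\Vert^2-\langle(S_K-S_K^2)f,f\rangle$. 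So your route (Theorem \ref{th1} to eliminate $\Vert S_{K^c}f\Vert^2$, then the bound on $S_K-S_K^2$) and the paper's route are two packagings of the same operator inequality. One small simplification: at the final step you do not need positivity of $S_K$ and $S_{K^c}$ or the spectral theorem, since $\tfrac14 I-(S_K-S_K^2)=\bigl(\tfrac12 I-S_K\bigr)^2\geq 0$ for any self-adjoint $S_K$; the Parseval hypothesis is then used only for $S_K+S_{K^c}=I$ and for $\sum_{j\in K^c}\vert\langle f,f_j\rangle\vert^2=\Vert f\Vert^2-a$.
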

In fact, the identity appears in Theorem \ref{th1} was obtained in \cite{bal07} as a particular case of the following result for general frames.
\begin{thm}\label{th01}
Let $\{ f_j\}_{j \in J }$ be a frame for $\BH$ with canonical dual frame $\{ \tilde{f_j}\}_{ j \in J }$. Then for every $K\subset J$ and every $f \in \BH$, we have
\[ \sum_{j \in K} \vert \langle f,f_j \rangle \vert^2- \sum_{j \in J} \vert \langle S_K f, \tilde{f_j}  \rangle \vert^2 = \sum_{j \in K^c} \vert \langle f,f_j \rangle \vert^2- \sum_{j \in J} \vert \langle S_{K^c} f, \tilde{f_j}  \rangle \vert^2 .\]
\end{thm}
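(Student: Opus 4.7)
The plan is to rewrite both sides of the claimed identity as quadratic forms of a single operator on $\BH$, after which the statement reduces to a short operator identity that follows from $S_K+S_{K^c}=S$ alone.

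First I would translate each of the four sums into an inner product. Since $S_K f=\sum_{j\in K}\langle f,f_j\rangle f_j$ is self-adjoint, one has $\sum_{j\in K}|\langle f,f_j\rangle|^2=\langle S_K f,f\rangle$, and analogously for $K^c$. For the ``dual'' sums $\sum_{j\in J}|\langle S_K f,\tilde f_j\rangle|^2$, I would invoke the standard fact that the canonical dual $\{\tilde f_j\}_{j\in J}=\{S^{-1}f_j\}_{j\in J}$ is itself a frame whose frame operator is $S^{-1}$ (a one-line calculation from $\tilde f_j=S^{-1}f_j$ and the definition of $S$). Hence $\sum_{j\in J}|\langle g,\tilde f_j\rangle|^2=\langle S^{-1}g,g\rangle$ for every $g\in\BH$; applying this with $g=S_Kf$ and using the self-adjointness of $S_K$ and $S^{-1}$ yields
\[ \sum_{j\in J}|\langle S_Kf,\tilde f_j\rangle|^2=\langle S^{-1}S_Kf,S_Kf\rangle=\langle S_K S^{-1}S_K f,f\rangle, \]
and symmetrically for $K^c$.

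Subtracting, the left-hand side of the theorem equals $\langle(S_K-S_K S^{-1}S_K)f,f\rangle$ and the right-hand side equals $\langle(S_{K^c}-S_{K^c}S^{-1}S_{K^c})f,f\rangle$. The goal therefore becomes the operator identity
\[ S_K-S_K S^{-1}S_K \;=\; S_{K^c}-S_{K^c}S^{-1}S_{K^c}. \]
To verify it, I substitute $S_{K^c}=S-S_K$ on the right and factor: $(S-S_K)-(S-S_K)S^{-1}(S-S_K)=(S-S_K)\bigl[I-S^{-1}(S-S_K)\bigr]=(S-S_K)S^{-1}S_K=S_K-S_K S^{-1}S_K$, where the middle step uses $S^{-1}S=I$. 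This matches the left side and completes the argument for every $f\in\BH$.

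\textbf{Main obstacle.} There is no analytic difficulty, as all sums converge unconditionally by the Bessel bound; the only point one must be careful about is correctly identifying the frame operator of the canonical dual as $S^{-1}$, since it is precisely this identification that converts the sum $\sum_{j\in J}|\langle S_K f,\tilde f_j\rangle|^2$ into a quadratic form amenable to the final one-line algebraic cancellation. Once that is in place, everything reduces to the self-adjointness of $S_K,S_{K^c},S,S^{-1}$ and the additive decomposition $S_K+S_{K^c}=S$.
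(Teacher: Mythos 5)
Your proof is correct. Note that the paper does not actually prove Theorem \ref{th01} itself --- it is quoted from \cite{bal07} as background --- so the natural comparison is with the paper's proof of its semi-continuous generalization, Theorem \ref{th02}, whose equality part is exactly this identity written as $\langle S^{-1}S_{\X_1}f,S_{\X_1}f\rangle+\langle S_{\X_1^c}f,f\rangle=\langle S^{-1}S_{\X_1^c}f,S_{\X_1^c}f\rangle+\langle S_{\X_1}f,f\rangle$. There the author conjugates by $S^{-1/2}$ to reduce to operators $P=S^{-1/2}S_{\X_1}S^{-1/2}$, $Q=S^{-1/2}S_{\X_1^c}S^{-1/2}$ summing to $I$, evaluates at $S^{1/2}f$, and invokes an abstract operator lemma (Lemma \ref{lem32}; for the pure identity, Lemma \ref{lem31} suffices). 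You instead verify the un-normalized operator identity $S_K-S_KS^{-1}S_K=S_{K^c}-S_{K^c}S^{-1}S_{K^c}$ directly by substituting $S_{K^c}=S-S_K$ and factoring; this is the same algebraic cancellation as in the proof of Lemma \ref{lem31} ($P-P^*P=(I-P^*)P=Q^*(I-Q)$), just carried out with $S$ in place of $I$ and without the square-root conjugation. Your identification of the canonical dual's frame operator as $S^{-1}$, which converts $\sum_{j\in J}|\langle S_Kf,\tilde f_j\rangle|^2$ into $\langle S_KS^{-1}S_Kf,f\rangle$, is exactly the step the paper performs in \eqref{eq331}. What your version buys is the avoidance of $S^{\pm1/2}$; what the paper's version buys is a single lemma that simultaneously yields the identity and the accompanying lower bound.
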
 
Motivated by these interesting results, the authors in \cite{gua06, zhu10} generalized Theorems \ref{th1} and \ref{th001} to canonical and alternate dual frames. In this section, we investigate the above mentioned results for semi-continuous $g$-frames and derive some important identities and inequalities of these frames. We first state a simple result on operators.
\begin{lem}\label{lem31}
\cite{zhu10} If $P,Q \in \mathcal{L}(\BH)$ satisfying $P+Q=I$, then $P-P^{*}P=Q^*-Q^*Q.$
\end{lem}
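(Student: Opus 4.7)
The plan is to prove this by direct algebraic substitution, exploiting the fact that the hypothesis $P+Q=I$ makes $Q$ a simple function of $P$, namely $Q=I-P$. Since the claim is an identity between two bounded operators, it suffices to rewrite the right-hand side in terms of $P$ alone and verify it matches the left-hand side.

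Concretely, I would first record that $Q=I-P$ implies $Q^{*}=I-P^{*}$, using linearity of the adjoint and the fact that $I^{*}=I$. Next, I would expand
\begin{equation*}
Q^{*}Q \;=\; (I-P^{*})(I-P) \;=\; I - P - P^{*} + P^{*}P,
\end{equation*}
where I am just using bilinearity of operator composition; no continuity or positivity hypotheses are needed. Subtracting this from $Q^{*}=I-P^{*}$ gives
\begin{equation*}
Q^{*} - Q^{*}Q \;=\; (I-P^{*}) - (I - P - P^{*} + P^{*}P) \;=\; P - P^{*}P,
\end{equation*}
which is precisely the left-hand side. This completes the argument.

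There is essentially no obstacle here, so I do not anticipate any hard step; the lemma is really a one-line identity whose content is that the operator map $T \mapsto T - T^{*}T$ is invariant under the involution $T \mapsto I - T$ (which swaps $P$ and $Q$ under the hypothesis). The only minor care one should take is to keep track of adjoints correctly, since $P$ and $Q$ are not assumed to be self-adjoint, so one should not be tempted to write $P - P^{*}P = P^{*} - P^{*}P$ or similar. Beyond that, the proof is a two-line expansion, and its value to the paper lies not in its depth but in the clean symmetric form it provides for later use in deriving the frame identities for semi-continuous $g$-frames.
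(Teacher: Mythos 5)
Your proof is correct and is essentially the same direct algebraic verification as the paper's: the paper simply factors $P-P^{*}P=(I-P^{*})P=Q^{*}(I-Q)=Q^{*}-Q^{*}Q$ instead of expanding $Q^{*}Q$ term by term, but the content is identical.
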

\begin{proof}
We compute $P-P^{*}P=(I-P^{*})P=Q^*(I-Q)=Q^*-Q^*Q.$
\end{proof}
\begin{thm}
Let $\{ \Lambda_{x,j}:x \in \X, j \in J \}$ be a Parseval semi-continuous $g$-frame for $\BH$ with respect to $(\X, \mu)$. Then for every $\X_1 \subset \X$ and every $f \in \BH$, we have
\begin{eqnarray}\label{eq31}
&& \int_{\X_1}\sum_{j \in J}\Vert \Lambda_{x,j} f \Vert^2 d\mu(x) - \bigg\Vert \int_{\X_1}\sum_{j \in J} \Lambda^*_{x,j} \Lambda_{x,j} f \; d\mu(x) \bigg\Vert^2 \nonumber \\
&=& \int_{\X_1^c}\sum_{j \in J}\Vert \Lambda_{x,j} f \Vert^2 d\mu(x) - \bigg\Vert \int_{\X_1^c}\sum_{j \in J} \Lambda^*_{x,j} \Lambda_{x,j} f \; d\mu(x) \bigg\Vert^2.
\end{eqnarray}
\end{thm}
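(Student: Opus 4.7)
The proof I have in mind is very short once the right objects are identified: it is a direct application of Lemma~\ref{lem31} to the partial frame operators $S_{\X_1}$ and $S_{\X_1^c}$, combined with the self-adjointness of those operators.

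First I would set $P = S_{\X_1}$ and $Q = S_{\X_1^c}$, both viewed as bounded operators on $\BH$, and record two facts. Since $\Lambda^*_{x,j}\Lambda_{x,j}$ is self-adjoint and positive for every $x,j$, the integrals defining $S_{\X_1}$ and $S_{\X_1^c}$ yield self-adjoint operators, so $P^*=P$ and $Q^*=Q$. Also, by additivity of the integral over $\X = \X_1 \sqcup \X_1^c$, we have $S_{\X_1} + S_{\X_1^c} = S$, and the Parseval hypothesis gives $S = I$. Thus $P + Q = I$ with both $P,Q$ self-adjoint.

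Next I would invoke Lemma~\ref{lem31} to conclude
\[
S_{\X_1} - S_{\X_1}^{2} \;=\; S_{\X_1^c} - S_{\X_1^c}^{2},
\]
using $P^{*}P = P^{2}$ and $Q^{*}Q = Q^{2}$. Taking the inner product with $f$ against $f$ on both sides then yields
\[
\langle S_{\X_1}f, f\rangle - \|S_{\X_1}f\|^{2} \;=\; \langle S_{\X_1^c}f, f\rangle - \|S_{\X_1^c}f\|^{2}.
\]
To conclude, I would translate $\langle S_{\X_1}f,f\rangle$ back into the frame-theoretic integral via the adjoint relation
\[
\langle S_{\X_1}f, f\rangle \;=\; \int_{\X_1}\sum_{j \in J}\langle \Lambda^*_{x,j}\Lambda_{x,j}f, f\rangle\, d\mu(x) \;=\; \int_{\X_1}\sum_{j \in J}\|\Lambda_{x,j}f\|^{2}\, d\mu(x),
\]
and likewise for $\X_1^c$. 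Substituting gives exactly identity \eqref{eq31}.

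There is essentially no hard step here; the only point that requires a little care is justifying the interchange of the inner product with the integral/sum defining $S_{\X_1}$, which follows from the Bessel bound (finiteness of the integrated sum) and the boundedness of $\langle \,\cdot\, , f\rangle$. Everything else is algebraic manipulation of operators plus the observation that being Parseval is exactly what is needed to force $S_{\X_1} + S_{\X_1^c} = I$ so that Lemma~\ref{lem31} applies.
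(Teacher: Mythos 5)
Your proposal is correct and follows essentially the same route as the paper: both apply Lemma~\ref{lem31} to $P=S_{\X_1}$, $Q=S_{\X_1^c}$ with $P+Q=S=I$ from the Parseval hypothesis, pass to the quadratic form $\langle S_{\X_1}f,f\rangle-\Vert S_{\X_1}f\Vert^2=\langle S_{\X_1^c}f,f\rangle-\Vert S_{\X_1^c}f\Vert^2$, and then rewrite $\langle S_{\X_1}f,f\rangle$ as $\int_{\X_1}\sum_{j\in J}\Vert\Lambda_{x,j}f\Vert^2\,d\mu(x)$. The only cosmetic difference is that you use self-adjointness of both operators to write $P^*P=P^2$, whereas the paper only invokes $S_{\X_1^c}^*=S_{\X_1^c}$ to simplify the right-hand side of the lemma.
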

\begin{proof}
Since $\{ \Lambda_{x,j}:x \in \X, j \in J \}$ is a Parseval semi-continuous $g$-frame, the corresponding frame operator $S=I$, and hence $S_{\X_1}+S_{\X_1^c}=I$. Note that $S_{\X_1^c}$ is a self-adjoint operator, and therefore $S_{\X_1^c}^*=S_{\X_1^c}$. Applying Lemma \ref{lem31} to the operators $S_{\X_1}$ and $S_{\X_1^c}$, we obtain that for every $f \in \BH$
\begin{eqnarray}\label{eq32}
&& \langle S_{\X_1} f,f \rangle - \langle S_{\X_1}^{*}S_{\X_1} f,f \rangle =\langle S_{\X_1^c}^* f, f \rangle -\langle S_{\X_1^c}^* S_{\X_1^c}f,f \rangle \nonumber \\
\Rightarrow && \langle S_{\X_1} f,f \rangle  - \Vert S_{\X_1} f \Vert^2 = \langle S_{\X_1^c} f,f \rangle  - \Vert S_{\X_1^c}f \Vert^2.
\end{eqnarray}
We have 
\begin{eqnarray}\label{eq33}
\langle S_{\X_1} f,f \rangle =\left\langle \int_{\X_1}\sum_{j \in J} \Lambda^*_{x,j} \Lambda_{x,j} f \; d\mu(x),f \right\rangle
&=& \int_{\X_1}\sum_{j \in J}\langle \Lambda_{x,j} f, \Lambda_{x,j}f   \rangle \; d\mu(x) \nonumber \\
&=& \int_{\X_1}\sum_{j \in J} \| \Lambda_{x,j} f \|^2 d\mu(x).
\end{eqnarray}
Similarly
\begin{equation}\label{eq34}
\langle S_{\X_1^c} f,f \rangle = \int_{\X_1^c}\sum_{j \in J} \| \Lambda_{x,j} f \|^2 d\mu(x).
\end{equation}
Using equations \eqref{eq33} and \eqref{eq34} in \eqref{eq32}, we obtain the desired result.
\end{proof}
Now we generalize Theorem \ref{th1} to dual semi-continuous $g$-frames. We first need the following lemma.
\begin{lem}\label{lem32}
\cite{por17} Let $P,Q \in \mathcal{L}(\BH)$ be two self-adjoint operators such that $P+Q=I$. Then for any $\lambda \in [0,1]$ and every $f \in \BH$ we have
\[ \Vert Pf \Vert^2+2 \lambda \langle Qf,f \rangle= \Vert Qf \Vert^2+2(1-\lambda)\langle Pf,f \rangle+ (2\lambda -1)\Vert f \Vert^2 \geq (1-(\lambda-1)^2)\Vert f \Vert^2. \]
\end{lem}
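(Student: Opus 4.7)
My plan is to split the claim into (i) the identity and (ii) the inequality, both of which follow from $P+Q=I$ together with the self-adjointness of $P$ and $Q$; the proof is purely algebraic and does not require anything beyond Lemma~\ref{lem31}.

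For the identity, I would invoke Lemma~\ref{lem31} applied to the pair $(P,Q)$. Since $P^{*}=P$ and $Q^{*}=Q$, the conclusion of that lemma reduces to $P-P^{2}=Q-Q^{2}$. Pairing both sides with $f\in\BH$ yields
\[
\Vert Pf\Vert^{2}-\Vert Qf\Vert^{2}=\langle Pf,f\rangle-\langle Qf,f\rangle.
\]
Substituting this into the difference
\[
\bigl(\Vert Pf\Vert^{2}+2\lambda\langle Qf,f\rangle\bigr)-\bigl(\Vert Qf\Vert^{2}+2(1-\lambda)\langle Pf,f\rangle\bigr)
\]
collapses the coefficients of $\langle Pf,f\rangle$ and $\langle Qf,f\rangle$ into a common factor $(2\lambda-1)$, and then $P+Q=I$ turns the resulting combination into $(2\lambda-1)\Vert f\Vert^{2}$, which is exactly the stated equality.

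For the inequality, I would use $P+Q=I$ once more to write $\langle Pf,f\rangle=\Vert f\Vert^{2}-\langle Qf,f\rangle$, so that the middle member of the chain becomes an expression in $\Vert Qf\Vert^{2}$, $\langle Qf,f\rangle$ and $\Vert f\Vert^{2}$ only. A short computation of the coefficient of $\Vert f\Vert^{2}$ gives
\[
2(1-\lambda)+(2\lambda-1)-\bigl(1-(\lambda-1)^{2}\bigr)=(1-\lambda)^{2},
\]
reducing the claim to
\[
\Vert Qf\Vert^{2}-2(1-\lambda)\langle Qf,f\rangle+(1-\lambda)^{2}\Vert f\Vert^{2}\ge 0,
\]
which is nothing but $\Vert Qf-(1-\lambda)f\Vert^{2}\ge 0$; here self-adjointness of $Q$ is used one more time, to ensure that $\langle Qf,f\rangle$ is real so the expansion of the squared norm is as expected. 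No serious obstacle arises; the only care needed is to keep track of the self-adjointness hypothesis, without which Lemma~\ref{lem31} would give an asymmetric identity and the completed-square interpretation would break down.
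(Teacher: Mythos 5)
Your proof is correct. Note that the paper itself gives no argument for this lemma --- it is stated with a citation to \cite{por17} --- so there is nothing internal to compare against; but your two-step verification is complete and uses exactly the resources at hand. The identity part correctly specializes Lemma~\ref{lem31} to the self-adjoint case ($P-P^2=Q-Q^2$, hence $\Vert Pf\Vert^2-\Vert Qf\Vert^2=\langle Pf,f\rangle-\langle Qf,f\rangle$) and the coefficient bookkeeping $(1-2(1-\lambda))\langle Pf,f\rangle+(2\lambda-1)\langle Qf,f\rangle=(2\lambda-1)\langle(P+Q)f,f\rangle$ checks out. The inequality part reduces, after eliminating $\langle Pf,f\rangle$ via $P+Q=I$, to $\Vert Qf-(1-\lambda)f\Vert^2\ge 0$, and you rightly flag that self-adjointness of $Q$ is what makes $\langle Qf,f\rangle$ real so the square expands as claimed. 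One minor observation: the restriction $\lambda\in[0,1]$ is never used in your argument --- both the identity and the inequality hold for every real $\lambda$ --- which is consistent with how the lemma is applied later in the paper.
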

\begin{thm}\label{th02}
Let $\{ \Lambda_{x,j}:x \in \X, j \in J \}$ be a semi-continuous $g$-frame for $\BH$ with respect to $(\X, \mu)$ and $\{ \tilde{\Lambda}_{x,j}:x \in \X, j \in J \}$ be the canonical dual frame of $\{ \Lambda_{x,j}:x \in \X, j \in J \}$. Then for any $\lambda \in [0,1]$, for every $\X_1 \subset \X$ and every $f \in \BH$, we have
\begin{eqnarray*}
&& \int_{\X}\sum_{j \in J} \| \tilde{\Lambda}_{x,j} S_{\X_1} f \|^2 d\mu(x)+\int_{\X_1^c}\sum_{j \in J} \| \Lambda_{x,j} f \|^2 d\mu(x) \\ 
&&=\int_{\X}\sum_{j \in J} \| \tilde{\Lambda}_{x,j} S_{\X_1^c} f \|^2 d\mu(x)+ \int_{\X_1}\sum_{j \in J} \| \Lambda_{x,j} f \|^2 d\mu(x)\\
&&\geq (2 \lambda- \lambda^2)\int_{\X_1}\sum_{j \in J} \| \Lambda_{x,j} f \|^2 d\mu(x)+(1 - \lambda^2) \int_{\X_1^c}\sum_{j \in J} \| \Lambda_{x,j} f \|^2 d\mu(x).
\end{eqnarray*} 
\end{thm}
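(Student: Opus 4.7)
The plan is to translate every integral appearing in the statement into an inner product involving the frame operators $S_{\X_1}$, $S_{\X_1^c}$, and $S$, and then to invoke Lemma \ref{lem32} on a suitable self-adjoint pair that sums to $I$. The basic identities I will need are: for any $h \in \BH$,
\[
\int_{\X_1}\sum_{j \in J}\|\Lambda_{x,j}h\|^2 d\mu(x)=\langle S_{\X_1}h,h\rangle,\quad \int_{\X}\sum_{j \in J}\|\tilde\Lambda_{x,j}h\|^2 d\mu(x)=\langle S^{-1}h,h\rangle,
\]
where the second follows from $\tilde\Lambda_{x,j}=\Lambda_{x,j}S^{-1}$ together with the definition of $S$. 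In particular, taking $h=S_{\X_1}f$ and $h=S_{\X_1^c}f$, the two ``dual'' integrals become $\langle S^{-1}S_{\X_1}f,S_{\X_1}f\rangle$ and $\langle S^{-1}S_{\X_1^c}f,S_{\X_1^c}f\rangle$.

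For the equality, set $A=S_{\X_1}$, $B=S_{\X_1^c}$, so $A+B=S$ and all three operators are positive and self-adjoint. Writing $\|S^{-1/2}Bf\|^2=\|S^{1/2}f-S^{-1/2}Af\|^2$ and expanding yields
\[
\langle S^{-1}Bf,Bf\rangle-\langle S^{-1}Af,Af\rangle=\langle Sf,f\rangle-2\langle Af,f\rangle=\langle(B-A)f,f\rangle,
\]
which is exactly the equality between the first and second displayed lines of the theorem after moving terms across. This is the cleanest route and uses only self-adjointness of $S_{\X_1}, S_{\X_1^c}, S$ together with $S_{\X_1}+S_{\X_1^c}=S$.

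For the inequality, I introduce the self-adjoint operators $T_1:=S^{-1/2}S_{\X_1}S^{-1/2}$ and $T_2:=S^{-1/2}S_{\X_1^c}S^{-1/2}$. Clearly $T_1+T_2=I$. With the substitution $g=S^{1/2}f$, a direct check gives
\[
\langle S_{\X_1}f,f\rangle=\langle T_1 g,g\rangle,\quad \langle S_{\X_1^c}f,f\rangle=\langle T_2 g,g\rangle,\quad \langle S^{-1}S_{\X_1^c}f,S_{\X_1^c}f\rangle=\|T_2 g\|^2.
\]
Apply Lemma \ref{lem32} with $P=T_2$, $Q=T_1$, the vector $g$, and parameter $1-\lambda\in[0,1]$. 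Since $1-(1-\lambda)^2=2\lambda-\lambda^2$ and $2(1-\lambda)-(1-\lambda)^2=1-\lambda^2$, the lemma's inequality (rearranged with $\|g\|^2=\langle T_1 g,g\rangle+\langle T_2 g,g\rangle$) yields
\[
\|T_2 g\|^2+\langle T_1 g,g\rangle\geq (2\lambda-\lambda^2)\langle T_1 g,g\rangle+(1-\lambda^2)\langle T_2 g,g\rangle,
\]
which is exactly the claimed inequality after translating back to $f$.

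The main obstacle is purely notational: making sure the bookkeeping between the three versions of $S$ (namely $S_{\X_1}$, $S_{\X_1^c}$, and $S$) is correct, in particular verifying that $T_1,T_2$ are well-defined self-adjoint operators (which uses positivity and invertibility of $S$) and that the change of variable $g=S^{1/2}f$ converts each integral into the right inner product involving $T_1$ or $T_2$. Once these computations are in place, Lemma \ref{lem32} closes the argument without further effort.
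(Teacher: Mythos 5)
Your proof is correct and follows essentially the same route as the paper: both conjugate by $S^{-1/2}$ so that $S^{-1/2}S_{\X_1}S^{-1/2}+S^{-1/2}S_{\X_1^c}S^{-1/2}=I$, substitute $S^{1/2}f$, and invoke Lemma \ref{lem32}, with your swap $P=T_2$, $Q=T_1$ at parameter $1-\lambda$ being just a reparametrization of the paper's application at parameter $\lambda$. Your separate hands-on expansion of $\|S^{-1/2}S_{\X_1^c}f\|^2=\|S^{1/2}f-S^{-1/2}S_{\X_1}f\|^2$ merely re-derives the equality part of that same lemma, so nothing substantively new is involved.
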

\begin{proof}
Let $S$ be the frame operator for $\{ \Lambda_{x,j}:x \in \X, j \in J \}$. Since $S_{\X_1}+S_{\X_1^c}=S$, it follows that $S^{-1/2}S_{\X_1} S^{-1/2}+S^{-1/2}S_{\X_1^c}S^{-1/2}=I.$ 
Considering $P=S^{-1/2}S_{\X_1}S^{-1/2}$, $Q=S^{-1/2}S_{\X_1^c}S^{-1/2}$, and $S^{1/2}f$ instead of $f$ in Lemma \ref{lem32}, we obtain
\begin{eqnarray}\label{eq3001}
\nonumber
&& \Vert S^{-1/2}S_{\X_1}f \Vert^2+2 \lambda \langle S^{-1/2}S_{\X_1^c}f,S^{1/2}f \rangle \\ 
&&= \Vert S^{-1/2}S_{\X_1^c}f \Vert^2+2(1-\lambda)\langle S^{-1/2}S_{\X_1} f,S^{1/2}f \rangle+ (2\lambda -1)\Vert S^{1/2}f \Vert^2 \nonumber \\ \nonumber
&& \geq  (1-(\lambda-1)^2)\Vert S^{1/2}f \Vert^2 \nonumber \\ \nonumber
&& \Rightarrow \langle S^{-1}S_{\X_1}f, S_{\X_1}f \rangle +\langle S_{\X_1^c}f,f \rangle  = \langle S^{-1}S_{\X_1^c}f, S_{\X_1^c}f \rangle +\langle S_{\X_1} f,f \rangle \\ 
&& \geq (2 \lambda- \lambda^2) \langle S_{\X_1}f,f \rangle+(1 - \lambda^2) \langle S_{\X_1^c}f,f \rangle .
\end{eqnarray} 
We have 
\begin{eqnarray}\label{eq331}
\langle S^{-1}S_{\X_1}f, S_{\X_1}f \rangle 
&=& \langle SS^{-1}S_{\X_1}f, S^{-1}S_{\X_1}f \rangle \nonumber \\
&=& \left\langle \int_{\X}\sum_{j \in J} \Lambda^*_{x,j} \Lambda_{x,j} S^{-1}S_{\X_1}f  d\mu(x), S^{-1}S_{\X_1}f \right\rangle \nonumber \\
&=& \int_{\X}\sum_{j \in J} \langle \Lambda_{x,j} S^{-1}S_{\X_1}f, \Lambda_{x,j} S^{-1}S_{\X_1}f \rangle \; d\mu(x) \nonumber \\
&=& \int_{\X}\sum_{j \in J} \langle \tilde{\Lambda}_{x,j} S_{\X_1}f, \tilde{\Lambda}_{x,j} S_{\X_1}f \rangle \; d\mu(x) \nonumber \\
&=& \int_{\X}\sum_{j \in J} \| \tilde{\Lambda}_{x,j} S_{\X_1}f\|^2 d\mu(x).
\end{eqnarray} 
Similarly 
\begin{eqnarray} \label{eq332}
\langle S^{-1}S_{\X_1^c}f, S_{\X_1^c}f \rangle= \int_{\X}\sum_{j \in J} \| \tilde{\Lambda}_{x,j} S_{\X_1^c}f\|^2 d\mu(x).
\end{eqnarray}
\begin{eqnarray}\label{eq333}
\langle S_{\X_1^c}f,f \rangle =\int_{\X_1^c}\sum_{j \in J} \| \Lambda_{x,j} f \|^2 d\mu(x).
\end{eqnarray}
\begin{eqnarray}\label{eq334}
\langle S_{\X_1}f,f \rangle =\int_{\X_1}\sum_{j \in J} \| \Lambda_{x,j} f \|^2 d\mu(x).
\end{eqnarray}
Using equations \eqref{eq331}--\eqref{eq334} in the inequality (\ref{eq3001}), we obtain the desired result.
\end{proof}
\begin{lem}\label{lem33}
\cite{por17} If $P,Q \in \mathcal{L}(\BH)$ satisfy $P+Q=I$, then for any $\lambda \in [0,1]$ and every $f \in \BH$ we have
\[ P^{*}P+\lambda(Q^{*}+Q)=Q^{*}Q+(1-\lambda)(P^{*}+P)+(2\lambda -1)I \geq (1-(\lambda-1)^2)I. \]
\end{lem}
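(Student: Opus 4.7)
The plan is to reduce everything to the substitution $Q = I - P$ (hence $Q^{*} = I - P^{*}$) and then to extract the inequality from a single nonnegative operator. The equality is purely algebraic, while the lower bound comes from completing the square.

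For the equality $P^{*}P + \lambda(Q^{*} + Q) = Q^{*}Q + (1-\lambda)(P^{*} + P) + (2\lambda - 1)I$, I would expand both sides using $Q = I - P$. The left-hand side becomes $P^{*}P + 2\lambda I - \lambda(P^{*} + P)$. For the right-hand side, $Q^{*}Q = (I - P^{*})(I - P) = I - P - P^{*} + P^{*}P$, which after adding $(1-\lambda)(P^{*}+P) + (2\lambda-1)I$ collapses to the same expression $P^{*}P + 2\lambda I - \lambda(P^{*} + P)$. This takes only a few lines of bookkeeping.

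For the inequality $P^{*}P + \lambda(Q^{*} + Q) \geq (1 - (\lambda-1)^2)I$, the key observation is that $1 - (\lambda - 1)^2 = 2\lambda - \lambda^2$, so the inequality is equivalent to
\[
P^{*}P - \lambda(P + P^{*}) + \lambda^2 I \geq 0.
\]
Since $\lambda \in [0,1] \subset \mathbb{R}$, the operator $(P - \lambda I)^{*}(P - \lambda I) = P^{*}P - \lambda P^{*} - \lambda P + \lambda^2 I$ is automatically positive semidefinite, which is exactly the required inequality. Substituting back the equality $P^{*}P + \lambda(Q^{*}+Q) = P^{*}P + 2\lambda I - \lambda(P+P^{*})$ finishes the argument.

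There is no real obstacle here: the identity is a direct computation, and the inequality is a one-line ``complete the square'' observation using that $\lambda$ is real. The only minor care is to keep track of adjoints (the self-adjointness of $P$ or $Q$ is \emph{not} assumed, unlike in Lemma~\ref{lem32}), but since the identity treats $P$ and $P^{*}$ symmetrically and the nonnegative operator used is $(P - \lambda I)^{*}(P - \lambda I)$, no self-adjointness is ever invoked.
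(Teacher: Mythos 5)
Your proof is correct and complete: the identity follows from the substitution $Q=I-P$, $Q^{*}=I-P^{*}$ exactly as you compute, and since $1-(\lambda-1)^2=2\lambda-\lambda^2$ the inequality reduces to the positivity of $(P-\lambda I)^{*}(P-\lambda I)$, which needs only that $\lambda$ is real. The paper itself gives no proof of this lemma (it is quoted from the reference \cite{por17}), so there is nothing to compare against directly, but your argument is the natural one and is consistent in spirit with the paper's one-line factorization proof of Lemma \ref{lem31}; your remark that no self-adjointness of $P$ or $Q$ is needed is also accurate.
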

\begin{thm}\label{th03}
Let $\{ \Lambda_{x,j}:x \in \X, j \in J \}$ be a semi-continuous $g$-frame for $\BH$ with respect to $(\X, \mu)$ and $\{ \gxj:x \in \X, j \in J \}$ be an alternate dual frame of $\{ \Lambda_{x,j}:x \in \X, j \in J \}$. Then for any $\lambda \in [0,1]$, for every $\X_1 \subset \X$ and every $f \in \BH$, we have
\begin{eqnarray*}
&& Re \bigg\{\int_{\X_1^c}\sum_{j \in J} \langle \gxj f,  \Lambda_{x,j} f \rangle d\mu(x) \bigg\}+ \bigg\Vert \int_{\X_1}\sum_{j \in J} \Lambda^*_{x,j} \gxj f \; d\mu(x) \bigg\Vert^2 \\
&&= Re \bigg\{\int_{\X_1}\sum_{j \in J} \langle \gxj f,  \Lambda_{x,j} f \rangle d\mu(x) \bigg\}+ \bigg\Vert \int_{\X_1^c}\sum_{j \in J} \Lambda^*_{x,j} \gxj f \; d\mu(x) \bigg\Vert^2 \\
&& \geq (2 \lambda - \lambda^2) Re \bigg\{\int_{\X_1}\sum_{j \in J} \langle \gxj f,  \Lambda_{x,j} f \rangle d\mu(x) \bigg\}+(1-\lambda^2) Re \bigg\{\int_{\X_1^c}\sum_{j \in J} \langle \gxj f,  \Lambda_{x,j} f \rangle d\mu(x) \bigg\}.
\end{eqnarray*}
\end{thm}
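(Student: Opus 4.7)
The plan is to run the same $P + Q = I$ machinery as in Theorem~\ref{th02}, but with the alternate-dual reconstruction operator replacing the canonical $S_{\X_1}$. First I would set
\[ Pf := \int_{\X_1}\sum_{j \in J} \Lambda^*_{x,j}\gxj f\; d\mu(x), \qquad Qf := \int_{\X_1^c}\sum_{j \in J} \Lambda^*_{x,j}\gxj f\; d\mu(x); \]
both operators lie in $\mathcal{L}(\BH)$ thanks to the Bessel bounds on $\{\Lambda_{x,j}\}$ and $\{\gxj\}$, and the alternate-dual identity~\eqref{eq301} gives $P + Q = I$. The crucial new feature, relative to the canonical-dual situation of Theorem~\ref{th02}, is that $P$ and $Q$ need not be self-adjoint, which is precisely why Lemma~\ref{lem33} (rather than Lemma~\ref{lem32}) is the right tool here, and why $\operatorname{Re}\{\cdot\}$ must appear throughout the statement.

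Next, I would apply Lemma~\ref{lem33} to this pair $(P,Q)$, evaluate at $f$, and pair with $f$, yielding
\[ \|Pf\|^2 + 2\lambda \operatorname{Re}\langle Qf,f\rangle = \|Qf\|^2 + 2(1-\lambda)\operatorname{Re}\langle Pf,f\rangle + (2\lambda-1)\|f\|^2 \geq (2\lambda - \lambda^2)\|f\|^2. \]
A routine adjoint-swap plus Fubini (legitimate by the semi-continuous $g$-Bessel property together with Cauchy--Schwarz across the dual pair) then identifies each operator-theoretic quantity with one of the integrals in the statement: $\|Pf\|^2$ and $\|Qf\|^2$ are the squared norms of the two vector integrals appearing there, while
\[ \operatorname{Re}\langle Pf,f\rangle = \operatorname{Re}\int_{\X_1}\sum_{j \in J}\langle \gxj f, \Lambda_{x,j}f\rangle\; d\mu(x), \]
and similarly for $Q$ over $\X_1^c$. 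Writing $a$ and $b$ for these two real numbers, the fact that $\langle (P+Q)f,f\rangle = \|f\|^2 \in \BR$ forces $a + b = \|f\|^2$.

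The displayed equality then drops out by specialising the Lemma's identity to $\lambda = 1$: one obtains $\|Pf\|^2 + 2b = \|Qf\|^2 + a + b$, equivalently $\|Pf\|^2 + b = \|Qf\|^2 + a$, exactly the first line of the theorem. For the lower bound, I would start from the inequality $\|Pf\|^2 + 2\lambda b \geq (2\lambda - \lambda^2)(a+b)$ and use $a + b = \|f\|^2$ to rearrange it into $\|Pf\|^2 + b \geq (2\lambda - \lambda^2)a + (1 - \lambda^2)b$, which is precisely the claimed inequality. I do not anticipate a real obstacle; the only mild points of care are the Fubini justification and the discipline of carrying real parts throughout, since without self-adjointness of $P$ one cannot collapse $\operatorname{Re}\langle Pf,f\rangle$ into a single nonnegative quadratic form as was done in the canonical-dual proof of Theorem~\ref{th02}.
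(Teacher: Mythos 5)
Your proposal is correct and follows essentially the same route as the paper: the same operators $F_{\X_1}$, $F_{\X_1^c}$ summing to $I$ via the alternate-dual identity \eqref{eq301}, the same application of Lemma~\ref{lem33} paired against $f$, and the same algebraic rearrangement using $\operatorname{Re}\langle F_{\X_1}f,f\rangle + \operatorname{Re}\langle F_{\X_1^c}f,f\rangle = \Vert f\Vert^2$ to reach both the equality and the lower bound. The identifications of $\langle F_{\X_1}f,f\rangle$ with the integrals in the statement are exactly the paper's equations \eqref{eq37}--\eqref{eq38}.
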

\begin{proof}
For $\X_1 \subset \X$ and $f \in \BH$, define the operator $F_{\X_1}$ by 
\begin{equation}\label{eq35}
F_{\X_1}f=\int_{\X_1}\sum_{j \in J} \Lambda^*_{x,j} \gxj f \; d\mu(x).
\end{equation}
Then $F_{\X_1} \in \mathcal{L}(\BH)$. By (\ref{eq301}), we have $F_{\X_1}+F_{\X_1^c}=I$. By Lemma \ref{lem33}, we get
\begin{eqnarray}\label{eq36}
&& (1-(\lambda-1)^2)\Vert f \Vert^2 \leq \langle F_{\X_1}^{*}F_{\X_1} f,f \rangle +\lambda \langle(F_{\X_1^c}^{*}+F_{\X_1^c})f,f \rangle \nonumber \\ &&= \langle F_{\X_1^c}^{*}F_{\X_1^c}f,f \rangle + (1-\lambda) \langle (F_{\X_1}^{*}+F_{\X_1})f,f \rangle +(2\lambda -1)\Vert  f \Vert^2 \nonumber \\
&& \Rightarrow (2 \lambda - \lambda^2) Re(\langle If,f \rangle) \leq \Vert F_{\X_1} f \Vert^2 +\lambda ( \overline{\langle F_{\X_1^c}f,f \rangle } +\langle F_{\X_1^c}f,f \rangle ) \nonumber \\
&&= \Vert F_{\X_1^c} f \Vert^2 +(1 - \lambda) ( \overline{\langle F_{\X_1} f,f \rangle } +\langle F_{\X_1} f,f \rangle )+(2\lambda -1)\Vert  f \Vert^2 \nonumber \\
&& \Rightarrow (2 \lambda - \lambda^2) Re(\langle F_{\X_1}f,f \rangle)+(1 -\lambda^2) Re(\langle F_{\X_1^c}f,f \rangle)  \leq \Vert F_{\X_1} f \Vert^2 + Re (\langle F_{\X_1^c}f,f \rangle) \nonumber \\
&&= \Vert F_{\X_1^c} f \Vert^2 + Re (\langle F_{\X_1} f,f \rangle) .
\end{eqnarray}
We have
\begin{eqnarray}\label{eq37}
\langle F_{\X_1}f,f \rangle = \left\langle \int_{\X_1}\sum_{j \in J} \Lambda^*_{x,j} \gxj f \; d\mu(x),f \right\rangle
= \int_{\X_1}\sum_{j \in J} \langle \gxj f , \Lambda_{x,j}f \rangle d\mu(x).
\end{eqnarray}
\begin{eqnarray}\label{eq38}
\langle F_{\X_1^c}f,f \rangle = \int_{\X_1^c}\sum_{j \in J} \langle \gxj f , \Lambda_{x,j}f \rangle d\mu(x).
\end{eqnarray}
Using equations \eqref{eq37}, \eqref{eq38} and \eqref{eq35} in (\ref{eq36}), we obtain the desired inequality.
\end{proof}
Next we give a generalization of the above theorem to a more general form that does not involve the real parts of the complex numbers.
\begin{thm}\label{th5}
Let $\{ \Lambda_{x,j}:x \in \X, j \in J \}$ be a semi-continuous $g$-frame for $\BH$ with respect to $(\X, \mu)$ and $\{ \gxj:x \in \X, j \in J \}$ be an alternate dual frame of $\{ \Lambda_{x,j}:x \in \X, j \in J \}$. Then for every $\X_1 \subset \X$ and every $f \in \BH$, we have
\begin{eqnarray*}
&& \bigg(\int_{\X_1^c}\sum_{j \in J} \langle \gxj f,  \Lambda_{x,j} f \rangle d\mu(x) \bigg)+ \bigg\Vert \int_{\X_1}\sum_{j \in J} \Lambda^*_{x,j} \gxj f \; d\mu(x) \bigg\Vert^2 \\
&&= \overline{\bigg(\int_{\X_1}\sum_{j \in J} \langle \gxj f,  \Lambda_{x,j} f \rangle d\mu(x) \bigg)}+ \bigg\Vert \int_{\X_1^c}\sum_{j \in J} \Lambda^*_{x,j} \gxj f \; d\mu(x) \bigg\Vert^2.
\end{eqnarray*}
\end{thm}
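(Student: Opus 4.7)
The plan is to reuse the operator framework from the proof of Theorem \ref{th03}. Define $F_{\X_1}$ and $F_{\X_1^c}$ as in \eqref{eq35}; by the dual identity \eqref{eq301} we have the clean operator relation $F_{\X_1}+F_{\X_1^c}=I$ in $\mathcal{L}(\BH)$. The main task then reduces to proving the purely algebraic operator identity: for any $P,Q \in \mathcal{L}(\BH)$ with $P+Q=I$ and any $f \in \BH$,
\[
\langle Qf,f\rangle + \Vert Pf\Vert^2 \;=\; \overline{\langle Pf,f\rangle} + \Vert Qf\Vert^2.
\]
After establishing this, applying it to $P=F_{\X_1}$, $Q=F_{\X_1^c}$ and substituting the integral expressions \eqref{eq37}, \eqref{eq38} for $\langle F_{\X_1}f,f\rangle$ and $\langle F_{\X_1^c}f,f\rangle$, together with the definition \eqref{eq35} for $\Vert F_{\X_1}f\Vert^2$ and $\Vert F_{\X_1^c}f\Vert^2$, yields the desired identity term by term.

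To verify the operator identity, I would substitute $Q=I-P$ and expand directly. Writing $\Vert Qf\Vert^2 = \langle (I-P)f,(I-P)f\rangle = \Vert f\Vert^2 - \langle Pf,f\rangle - \overline{\langle Pf,f\rangle} + \Vert Pf\Vert^2$, and similarly $\langle Qf,f\rangle = \Vert f\Vert^2 - \langle Pf,f\rangle$, a one-line rearrangement shows that both sides of the identity equal $\Vert f\Vert^2 - \langle Pf,f\rangle + \Vert Pf\Vert^2$. Note that, unlike Lemma \ref{lem31}, we do not assume $P$ or $Q$ is self-adjoint here — which is essential, because $F_{\X_1}$ need not be self-adjoint when $\{\gxj\}$ is an arbitrary alternate dual rather than the canonical dual.

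The translation step is then routine: by Fubini (applied to the operator-valued integrals, which are well-defined because the dual pair gives a semi-continuous $g$-Bessel structure),
\[
\langle F_{\X_1}f,f\rangle = \int_{\X_1}\sum_{j\in J}\langle \gxj f,\Lambda_{x,j}f\rangle\,d\mu(x),
\]
and similarly for $\X_1^c$, while $\Vert F_{\X_1}f\Vert^2$ matches the second term on the right-hand side of the theorem by \eqref{eq35}. Substituting and taking complex conjugates on the appropriate term gives exactly the claimed equality.

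I expect the only subtle point to be the justification that all of the integrals and sums converge absolutely so that the interchanges used (including the conjugation passing through the integral, i.e.\ $\overline{\langle F_{\X_1}f,f\rangle} = \int_{\X_1}\sum_{j\in J}\overline{\langle \gxj f,\Lambda_{x,j}f\rangle}\,d\mu(x)$) are legitimate. This follows from the Bessel property of $\{\Lambda_{x,j}\}$ combined with the fact that $\{\gxj\}$, being an alternate dual, is itself semi-continuous $g$-Bessel, so Cauchy–Schwarz yields integrability of the pointwise inner products. No further analytic machinery is needed, and the rest is algebra.
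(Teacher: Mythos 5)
Your proof is correct and follows essentially the same route as the paper: the paper likewise defines $F_{\X_1}$ via \eqref{eq35}, uses $F_{\X_1}+F_{\X_1^c}=I$ from \eqref{eq301}, and applies Lemma \ref{lem31}, whose quadratic-form consequence $\langle Qf,f\rangle+\Vert Pf\Vert^2=\overline{\langle Pf,f\rangle}+\Vert Qf\Vert^2$ is exactly the identity you re-derive by expanding $Q=I-P$. The only cosmetic difference is that you prove this scalar identity directly rather than citing the operator identity of Lemma \ref{lem31}, and you add the (correct) observation that no self-adjointness of $F_{\X_1}$ is needed.
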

\begin{proof}
For $\X_1 \subset \X$ and $f \in \BH$, we define the operator $F_{\X_1}$ as in Theorem \ref{th03}. Therefore, we have $F_{\X_1}+F_{\X_1^c}=I$. By Lemma \ref{lem31}, we have
\begin{eqnarray*}
&& \bigg(\int_{\X_1^c}\sum_{j \in J} \langle \gxj f,  \Lambda_{x,j} f \rangle d\mu(x) \bigg)+ \bigg\Vert \int_{\X_1}\sum_{j \in J} \Lambda^*_{x,j} \gxj f \; d\mu(x) \bigg\Vert^2 \\
&& = \langle F_{\X_1^c}f,f \rangle + \langle F_{\X_1}^* F_{\X_1} f,f \rangle 
= \langle F_{\X_1}^*f,f \rangle+\langle F_{\X_1^c}^* F_{\X_1^c} f,f \rangle \\
&& = \overline{ \langle F_{\X_1} f,f \rangle}+\Vert F_{\X_1^c} f \Vert^2 \\
&& = \overline{\bigg(\int_{\X_1}\sum_{j \in J} \langle \gxj f,  \Lambda_{x,j} f \rangle d\mu(x) \bigg)}+ \bigg\Vert \int_{\X_1^c}\sum_{j \in J} \Lambda^*_{x,j} \gxj f \; d\mu(x) \bigg\Vert^2.
\end{eqnarray*}
Hence the relation stated in the theorem holds.
\end{proof}

\section{Stability of semi-continuous g-frames}\label{sec4}
The stability of frames is important in practice, so it has received much attentions and is, therefore, studied widely by many authors (see \cite{chr13, pori17, sun07}). In this section, we study the stability of semi-continuous $g$-frames. The following is a fundamental result in the study of the stability of frames.
\begin{prop}
$($\cite{cas97}, $\mathrm{Theorem \;2})$
Let $\{f_i \}_{i=1}^\infty$ be a frame for some Hilbert space $\BH$ with bounds $A, B$. Let $\{g_i \}_{i=1}^\infty \subseteq \BH$ and assume that there exist constants $\lambda_1, \lambda_2, \mu \geq 0$ such that $\max(\lambda_1+\frac{\mu}{\sqrt{A}}, \lambda_2)<1$ and
\begin{equation}\label{eq41}
\left\Vert \sum_{i=1}^n c_i (f_i-g_i) \right\Vert \leq \lambda_1 \left\Vert \sum_{i=1}^n c_i f_i \right\Vert +\lambda_2 \left\Vert \sum_{i=1}^n c_i g_i \right\Vert + \mu \left[ \sum_{i=1}^n |c_i|^2 \right]^{1/2} 
\end{equation} 
for all $c_1,...,c_n(n \in \BN).$ Then $\{g_i \}_{i=1}^\infty$ is a frame for $\BH$ with bounds
\[ A \left( 1 - \frac{\lambda_1+\lambda_2+\frac{\mu}{\sqrt{A}}}{1+\lambda_2} \right)^2, \;\; B \left( 1+ \frac{\lambda_1+\lambda_2+\frac{\mu}{\sqrt{B}}}{1-\lambda_2} \right)^2.\]
\end{prop}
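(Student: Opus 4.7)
The plan is a Paley--Wiener style perturbation argument: one reads (4.1) as an operator-level bound between the synthesis maps of $\{f_i\}$ and $\{g_i\}$, and propagates this control to both frame bounds for $\{g_i\}$.

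First I would establish the upper (Bessel) bound. Combining the triangle inequality $\|\sum c_i g_i\| \le \|\sum c_i f_i\| + \|\sum c_i (f_i-g_i)\|$ with (4.1) and absorbing the $\lambda_2 \|\sum c_i g_i\|$ term on the left (legal because $\lambda_2 < 1$), the Bessel estimate $\|\sum c_i f_i\| \le \sqrt{B}\,\|c\|_{\ell^2}$ gives
\[
\Bigl\|\sum c_i g_i\Bigr\| \;\le\; \sqrt{B}\,\Bigl(1+\tfrac{\lambda_1+\lambda_2+\mu/\sqrt{B}}{1-\lambda_2}\Bigr)\|c\|_{\ell^2}.
\]
A standard density argument extends (4.1) from finitely supported $c$ to all of $\ell^2$, producing the claimed upper frame bound for $\{g_i\}$.

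For the lower bound I would introduce the bounded operator $U \in \mathcal{L}(\BH)$ defined by
\[
Uf \;=\; \sum_i \langle f,\tilde f_i\rangle\, g_i,
\]
where $\{\tilde f_i\}$ is the canonical dual of $\{f_i\}$. Using the reconstruction $f=\sum\langle f,\tilde f_i\rangle f_i$ and feeding the coefficients $c_i=\langle f,\tilde f_i\rangle$ (which satisfy $\|c\|_{\ell^2} \le \|f\|/\sqrt{A}$) into (4.1) yields
\[
\|f-Uf\| \;\le\; \lambda_1\|f\|+\lambda_2\|Uf\|+\tfrac{\mu}{\sqrt{A}}\|f\|.
\]
Combining with $\|Uf\| \ge \|f\|-\|f-Uf\|$ and rearranging produces $\|Uf\| \ge C\|f\|$, where $C=\tfrac{1-\lambda_1-\mu/\sqrt{A}}{1+\lambda_2}$ is precisely the constant $1-M_1$ of the claimed lower bound. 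Once $U$ is shown to be invertible, every $h \in \BH$ admits a representation $h=\sum c_i g_i$ with $c_i=\langle U^{-1}h,\tilde f_i\rangle$ satisfying $\|c\|_{\ell^2}^2 \le \|h\|^2/(AC^2)$. Applying Cauchy--Schwarz to the identity $\|h\|^2 = \sum c_i\,\overline{\langle h,g_i\rangle}$ then gives
\[
\sum_i |\langle h,g_i\rangle|^2 \;\ge\; AC^2\|h\|^2,
\]
which is the desired lower frame bound.

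The main obstacle is the invertibility of $U$. The estimate $\|Uf\| \ge C\|f\|$ alone only ensures that $U$ is injective with closed range, and the Neumann series route to surjectivity would require $\|I-U\|<1$; the sharpest estimate the hypothesis affords is $\|I-U\| \le \tfrac{\lambda_1+\lambda_2+\mu/\sqrt{A}}{1-\lambda_2}$, which need not be $<1$ under $\max\{\lambda_1+\mu/\sqrt{A},\lambda_2\}<1$. I would therefore prove surjectivity by showing $\operatorname{Ran}(U)$ is dense: if $h\perp\operatorname{Ran}(U)$, then $U^*h=\sum\langle h,g_i\rangle\tilde f_i=0$, and I would feed the $\ell^2$-sequence $(\langle h,g_i\rangle)$ (summability from the Bessel property already established) into (4.1) to transfer the vanishing back to $h$ itself. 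Making this density argument rigorous is the main technical point I expect to spend time on.
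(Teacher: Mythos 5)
The paper states this proposition without proof, citing it from \cite{cas97}, so there is no in-paper argument to compare against; judging your proposal on its own terms, the upper-bound part and the reduction of the lower bound to the invertibility of $U$ are correct and standard, but the one step you yourself flag as the technical heart --- surjectivity of $U$ --- is exactly where your proposed argument breaks down. Suppose $h\perp\operatorname{Ran}(U)$ and set $b_i=\langle h,g_i\rangle$. Then $U^*h=\sum_i b_i\tilde f_i=0$, hence also $\sum_i b_i f_i=0$ (the synthesis operators of $\{f_i\}$ and $\{\tilde f_i\}=\{S^{-1}f_i\}$ have the same kernel), and feeding $b$ into \eqref{eq41} gives $\bigl\Vert\sum_i b_i g_i\bigr\Vert=\bigl\Vert\sum_i b_i(f_i-g_i)\bigr\Vert\le \lambda_2\bigl\Vert\sum_i b_i g_i\bigr\Vert+\mu\Vert b\Vert_{\ell^2}$, i.e.\ $\bigl\Vert\sum_i b_i g_i\bigr\Vert\le \tfrac{\mu}{1-\lambda_2}\Vert b\Vert_{\ell^2}$. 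Pairing with $h$ yields $\Vert b\Vert_{\ell^2}^2=\bigl\langle \sum_i b_i g_i,h\bigr\rangle\le \tfrac{\mu}{1-\lambda_2}\Vert b\Vert_{\ell^2}\Vert h\Vert$, which is only an \emph{upper} bound on $\Vert b\Vert_{\ell^2}$; it does not force $h=0$. A lower bound on $\sum_i|\langle h,g_i\rangle|^2$ is precisely what the theorem is trying to establish, so this route is circular, and for $\mu>0$ nothing contradictory comes out of it.

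The standard way to close the gap (and the route taken in \cite{cas97}, essentially Hilding's lemma, i.e.\ the method of continuity) is: from $\Vert f-Uf\Vert\le a\Vert f\Vert+\lambda_2\Vert Uf\Vert$ with $a=\lambda_1+\mu/\sqrt{A}<1$ and $\lambda_2<1$, consider the homotopy $U_t=(1-t)I+tU$, $t\in[0,1]$. Using $\Vert Uf\Vert\le\tfrac1t\Vert U_tf\Vert+\tfrac{1-t}{t}\Vert f\Vert$ one gets $\Vert f-U_tf\Vert\le\bigl(ta+(1-t)\lambda_2\bigr)\Vert f\Vert+\lambda_2\Vert U_tf\Vert$, hence $\Vert U_tf\Vert\ge\tfrac{1-\max(a,\lambda_2)}{1+\lambda_2}\Vert f\Vert=:\kappa\Vert f\Vert$ with $\kappa>0$ uniform in $t$. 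The set of $t$ for which $U_t$ is invertible contains $t=0$, is open, and is closed in $[0,1]$ thanks to the uniform lower bound $\kappa$, so $U=U_1$ is invertible. With that lemma in place, the rest of your argument goes through verbatim and produces exactly the stated frame bounds.
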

Similar to discrete frames, semi-continuous $g$-frames are stable under small perturbations. The stability of semi-continuous $g$-frames is discussed in the following theorem.
\begin{thm}\label{th41}
Let $\{ \Lambda_{x,j}:x \in \X, j \in J \}$ be a semi-continuous $g$-frame for $\BH$ with respect to $(\X, \mu)$, with frame bounds $A$ and $B$. Suppose that $\gm \in \mathcal{L}(\BH,\BK_{x,j})$ for any $x \in \X, \; j \in J$ and there exist constants $\lambda_1, \lambda_2, \mu \geq 0$ such that $\max(\lambda_1+\frac{\mu}{\sqrt{A}}, \lambda_2)<1$ and the following condition is satisfied
\begin{eqnarray}\label{eq43}
&& \left( \int_{\X}\sum_{j \in J} \Vert (\Lambda_{x,j}-\Gamma_{x,j})f \Vert^2 d\mu(x) \right)^{1/2} \nonumber \\
&& \leq \lambda_1 \left( \int_{\X}\sum_{j \in J} \Vert \Lambda_{x,j}(f) \Vert^2 d\mu(x) \right)^{1/2} + \lambda_2 \left( \int_{\X}\sum_{j \in J} \Vert \Gamma_{x,j}(f) \Vert^2 d\mu(x) \right)^{1/2} + \mu \| f \|,
\end{eqnarray}
for all $f \in \BH$. Then $\{\gm: x \in \X, j \in J\}$ is a semi-continuous $g$-frame for $\BH$ with respect to $(\X, \mu)$, with frame bounds
\begin{equation}\label{eq10}
A \left( 1 - \frac{\lambda_1+\lambda_2+\frac{\mu}{\sqrt{A}}}{1+\lambda_2} \right)^2, \;\; B \left( 1+ \frac{\lambda_1+\lambda_2+\frac{\mu}{\sqrt{B}}}{1-\lambda_2} \right)^2.
\end{equation}
\end{thm}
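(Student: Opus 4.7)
The plan is to mimic the proof of the classical Casazza--Christensen perturbation theorem quoted in the excerpt, but working directly with the norm induced by semi-continuous $g$-frames, namely
\[
|\!|\!| f |\!|\!|_\Lambda := \left( \int_{\X}\sum_{j\in J}\|\Lambda_{x,j}f\|^2\, d\mu(x) \right)^{1/2},
\]
and analogously $|\!|\!| \cdot |\!|\!|_\Gamma$. The hypothesis \eqref{eq43} reads $|\!|\!| f |\!|\!|_{\Lambda-\Gamma} \leq \lambda_1 |\!|\!| f |\!|\!|_\Lambda + \lambda_2 |\!|\!| f |\!|\!|_\Gamma + \mu\|f\|$, and the frame hypothesis on $\{\Lambda_{x,j}\}$ gives $\sqrt{A}\|f\| \leq |\!|\!| f |\!|\!|_\Lambda \leq \sqrt{B}\|f\|$. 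So the whole proof is essentially an exercise in the triangle inequality applied to these three quantities.

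First I would verify the upper (Bessel) bound. Apply the triangle inequality in the form $|\!|\!| f |\!|\!|_\Gamma \leq |\!|\!| f |\!|\!|_{\Lambda-\Gamma} + |\!|\!| f |\!|\!|_\Lambda$, substitute \eqref{eq43}, and collect the $|\!|\!| f |\!|\!|_\Gamma$-term on the left-hand side using the assumption $\lambda_2<1$. This yields
\[
(1-\lambda_2)\,|\!|\!| f |\!|\!|_\Gamma \leq (1+\lambda_1)\,|\!|\!| f |\!|\!|_\Lambda + \mu\|f\| \leq \Bigl(\sqrt{B}(1+\lambda_1) + \mu\Bigr)\|f\|,
\]
and rearranging gives exactly $|\!|\!| f |\!|\!|_\Gamma \leq \sqrt{B}\bigl(1+\frac{\lambda_1+\lambda_2+\mu/\sqrt{B}}{1-\lambda_2}\bigr)\|f\|$. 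Squaring delivers the stated upper bound in \eqref{eq10}.

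For the lower bound I would start from the reversed triangle inequality $|\!|\!| f |\!|\!|_\Lambda \leq |\!|\!| f |\!|\!|_{\Lambda-\Gamma} + |\!|\!| f |\!|\!|_\Gamma$, insert \eqref{eq43}, and this time collect the $|\!|\!| f |\!|\!|_\Lambda$-term using $\lambda_1+\mu/\sqrt{A}<1$. That produces
\[
\bigl(1-\lambda_1\bigr)|\!|\!| f |\!|\!|_\Lambda - \mu\|f\| \leq (1+\lambda_2)\,|\!|\!| f |\!|\!|_\Gamma,
\]
and applying $|\!|\!| f |\!|\!|_\Lambda \geq \sqrt{A}\|f\|$ on the left yields
\[
\frac{\sqrt{A}(1-\lambda_1) - \mu}{1+\lambda_2}\,\|f\| \leq |\!|\!| f |\!|\!|_\Gamma,
\]
which after squaring is the asserted lower bound. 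The hypothesis $\lambda_1+\mu/\sqrt{A}<1$ is precisely what is needed to ensure positivity of this lower bound, i.e.\ a genuine frame and not merely a Bessel sequence.

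Two small technical checks remain: weak measurability of $\{\Gamma_{x,j}\}$, which follows because \eqref{eq43} makes $x\mapsto \Gamma_{x,j}f$ measurable (as a perturbation of the measurable $x\mapsto \Lambda_{x,j}f$ by an $L^2$-controllable family), and the finiteness of $|\!|\!| f |\!|\!|_\Gamma$ needed to legitimately move it across the inequality; this last point is the only real obstacle and I would handle it by first proving the upper bound under the a priori assumption of finiteness of the integral (e.g.\ by restricting to a truncated version of the integrand, applying the estimate, and then using monotone convergence to remove the truncation). Once finiteness is established from the finite upper bound, the subsequent manipulations are entirely algebraic.
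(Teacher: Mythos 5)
Your proposal is correct and follows essentially the same route as the paper's proof: apply the triangle inequality for the seminorm $\left( \int_{\X}\sum_{j \in J} \Vert \cdot \Vert^2 d\mu(x) \right)^{1/2}$ together with the hypothesis \eqref{eq43} and the frame bounds for $\{\Lambda_{x,j}\}$, then collect the $\Gamma$-term using $\lambda_2<1$ (resp.\ the $\Lambda$-term using $\lambda_1+\mu/\sqrt{A}<1$). The only differences are cosmetic: the paper writes out the upper bound and dismisses the lower one with ``similarly,'' while you carry out both and additionally flag the measurability and a priori finiteness issues that the paper leaves implicit.
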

\begin{proof}
Notice that
\[\int_{\X}\sum_{j \in J} \Vert \Lambda_{x,j}(f) \Vert^2 d\mu(x) \leq B \| f \|^2. \]
From \eqref{eq43} we see that
\[ \left( \int_{\X}\sum_{j \in J} \Vert (\Lambda_{x,j}-\Gamma_{x,j})f \Vert^2 d\mu(x) \right)^{1/2} \leq \left( \lambda_1 \sqrt{B} + \mu \right)\| f \| + \lambda_2 \left( \int_{\X}\sum_{j \in J} \Vert \Gamma_{x,j}(f) \Vert^2 d\mu(x) \right)^{1/2}.  \]
Using the triangle inequality, we get
\begin{eqnarray*}
&& \left( \int_{\X}\sum_{j \in J} \Vert (\Lambda_{x,j}-\Gamma_{x,j})f \Vert^2 d\mu(x) \right)^{1/2} \\
&& \geq \left( \int_{\X}\sum_{j \in J} \Vert \Gamma_{x,j}(f) \Vert^2 d\mu(x) \right)^{1/2} - \left( \int_{\X}\sum_{j \in J} \Vert \Lambda_{x,j}(f) \Vert^2 d\mu(x) \right)^{1/2}.
\end{eqnarray*}
Hence
\begin{eqnarray*}
&& (1-\lambda_2) \left( \int_{\X}\sum_{j \in J} \Vert \Gamma_{x,j}(f) \Vert^2 d\mu(x) \right)^{1/2} \\
&& \leq \left( \lambda_1 \sqrt{B} + \mu \right)\| f \| + \left( \int_{\X}\sum_{j \in J} \Vert \Lambda_{x,j}(f) \Vert^2 d\mu(x) \right)^{1/2} 
\leq \sqrt{B} \bigg( 1 + \lambda_1 + \frac{\mu}{\sqrt{B}} \bigg)\Vert f \Vert.
\end{eqnarray*}
Therefore
\[\int_{\X}\sum_{j \in J} \Vert \Gamma_{x,j}(f) \Vert^2 d\mu(x)\leq B \left( 1+ \frac{\lambda_1+\lambda_2+\frac{\mu}{\sqrt{B}}}{1-\lambda_2} \right)^2 \Vert f \Vert^2. \]
Similarly, we can prove that
\[\int_{\X}\sum_{j \in J} \Vert \Gamma_{x,j}(f) \Vert^2 d\mu(x) \geq A \left( 1 - \frac{\lambda_1+\lambda_2+\frac{\mu}{\sqrt{A}}}{1+\lambda_2} \right)^2 \Vert f \Vert^2.  \]
This completes the proof.
\end{proof}
\begin{rmk}
In general, the inequality \eqref{eq43} does not imply that $\{\gm: x \in \X, j \in J\}$ is a semi-continuous $g$-frame regardless how small the parameters $\lambda_1, \lambda_2, \mu$ are. A counterexample for $g$-frames can be found in \cite{sun07}, and an example can be constructed similarly for semi-continuous $g$-frames. 
\end{rmk}
\begin{cor}
Let $\{\Lambda_{x,j}:x \in \X, j \in J \}$ be a semi-continuous $g$-frame for $\BH$ with respect to $(\X, \mu)$, with frame bounds $A, B$, and let $\{\gm: x \in \X, j \in J\}$ be a family in $\mathcal{L}(\BH,\BK_{x,j})$ for any $x \in \X, \; j \in J$. Assume that there exists a constant $0<M<A$ such that
\[\int_{\X}\sum_{j \in J} \Vert (\Lambda_{x,j}-\Gamma_{x,j})f \Vert^2 d\mu(x) \leq M \Vert f \Vert^2, \; \forall f \in \BH, \] then 
$\{\gm: x \in \X, j \in J\}$ is a semi-continuous $g$-frame for $\BH$ with respect to $(\X, \mu)$, with bounds $A[1-(M/A)^{1/2}]^2$ and $B[1+(M/B)^{1/2}]^2$.
\end{cor}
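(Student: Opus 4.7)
The plan is to deduce this corollary as an immediate specialization of Theorem~\ref{th41}. The hypothesis gives a direct square-root bound
\[
\left( \int_{\X}\sum_{j \in J} \Vert (\Lambda_{x,j}-\Gamma_{x,j})f \Vert^2 d\mu(x) \right)^{1/2} \leq \sqrt{M}\, \Vert f \Vert,
\]
which matches \eqref{eq43} precisely when we choose the parameters $\lambda_1 = 0$, $\lambda_2 = 0$, and $\mu = \sqrt{M}$. So the plan is to verify the admissibility condition of Theorem~\ref{th41} for these choices and then simplify the resulting frame bounds \eqref{eq10}.

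First I would check the hypothesis $\max(\lambda_1 + \mu/\sqrt{A},\, \lambda_2) < 1$. With the chosen parameters this reduces to $\sqrt{M/A} < 1$, which holds by the assumption $0 < M < A$. Then Theorem~\ref{th41} applies and yields that $\{\gm : x \in \X,\, j \in J\}$ is a semi-continuous $g$-frame for $\BH$ with respect to $(\X,\mu)$.

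Finally I would compute the two bounds in \eqref{eq10} with $\lambda_1 = \lambda_2 = 0$ and $\mu = \sqrt{M}$: the lower bound collapses to
\[
A\left(1 - \frac{\sqrt{M}/\sqrt{A}}{1}\right)^2 = A\bigl[1 - (M/A)^{1/2}\bigr]^2,
\]
and the upper bound similarly simplifies to $B\bigl[1 + (M/B)^{1/2}\bigr]^2$, which matches the claim. There is no real obstacle here, the entire content is recognizing the right choice of parameters; the only thing to be careful about is that the $L^2$-type inequality in the hypothesis really does feed \eqref{eq43} with $\lambda_1 = \lambda_2 = 0$ (and not something like a Cauchy--Schwarz mixing of the two integrals), but this is immediate from the stated bound.
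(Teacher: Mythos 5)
Your proposal is correct and is essentially identical to the paper's own proof: both set $\lambda_1=\lambda_2=0$, $\mu=\sqrt{M}$, note that $\mu/\sqrt{A}=\sqrt{M/A}<1$ since $M<A$, and then read off the bounds from Theorem~\ref{th41}. Nothing further is needed.
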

\begin{proof}
Let $\lambda_1=\lambda_2=0$ and $\mu=\sqrt{M}.$ Since $M< A$, $\mu/\sqrt{A}=\sqrt{M/A}<1.$ So, by Theorem \ref{th41}, $\{\gm: x \in \X, j \in J\}$ is a semi-continuous $g$-frame for $\BH$ with respect to $(\X, \mu)$, with bounds $A[1-(M/A)^{1/2}]^2$ and 
$B[1+(M/B)^{1/2}]^2$.
\end{proof}

\section*{Acknowledgments}
The author is deeply indebted to Dr. Azita Mayeli for several valuable comments and suggestions. The author is grateful to the United States-India Educational Foundation for providing the Fulbright-Nehru Doctoral Research Fellowship, and Department of Mathematics and Computer Science, the Graduate Center, City University of New York, New York, USA for its kind hospitality during the period of this work. He would also like to express his gratitude to the Norbert Wiener Center for Harmonic Analysis and Applications at the University of Maryland, College Park for its kind support.

\bibliographystyle{plain}

\end{document}